\newtheoremstyle{plainsc}
{\topsep}
{\topsep}
{\itshape}
{}
{\large\scshape}
{}
{5pt plus 1pt minus 1pt}
{\thmname{#1}\thmnumber{ #2}\thmnote{ (#3)}.}
\newtheoremstyle{definitionsc}
{}
{}
{\normalfont}
{}
{\large\scshape}
{ }
{ }
{\thmname{#1}\thmnumber{ #2}\thmnote{ (#3)}.}
\newtheoremstyle{remarksc}
{0.5\topsep}
{0.5\topsep}
{\normalfont}
{}
{\large\itshape}
{}
{ }
{\thmname{#1}\thmnumber{ #2}\thmnote{ (#3)}.}
\newcommand{\R}{{\mathbb R}}
\newcommand{\Z}{{\mathbb Z}}
\newcommand{\RZ}{\R/\Z}
\newcommand{\I}{\mathrm{i}}
\newcommand{\dr}{\mathrm{d}}
\renewcommand{\Re}{\operatorname{Re}}
\renewcommand{\Im}{\operatorname{Im}}
\newcommand{\bbS}{\mathbb{S}}
\DeclareMathOperator{\arccosh}{arccosh}
\newcommand{\av}{\mathord{\bf av}}
\renewcommand{\tilde}{\widetilde}
\newcommand{\id}{\mathbbm{1}}
\theoremstyle{plainsc}
\newtheorem{theorem}{Theorem}[section]
 \newtheorem{cor}[theorem]{Corollary}
 \theoremstyle{definitionsc}
\theoremstyle{remarksc}
 \newtheorem{remark}[theorem]{Remark}
\definecolor{blue}{rgb}{0.05,0.2,0.7}
\renewcommand\footnotemark{}
\renewcommand*\env@matrix[1][\arraystretch]{%
  \edef\arraystretch{#1}%
  \hskip -\arraycolsep
  \let\@ifnextchar\new@ifnextchar
  \array{*\c@MaxMatrixCols c}}
\begin{document}
\title{Computations of eigenvalues and resonances on perturbed hyperbolic surfaces with cusps
\footnote{All the videos accompanying this paper are available either at \href{http://michaellevitin.net/hyperbolic.html}{\texttt{michaellevitin.net/hyperbolic.html}} or as a dedicated \href{https://www.youtube.com/playlist?list=PLZB0Pfj9QSAIsKeCU8cWDcM-KZw0I6BsR}{\texttt{YouTube playlist}}}
\footnote{{\bf MSC classes:} 58J50, 35P25, 11F72, 65N25, 65N30}
\footnote{{\bf Keywords:} hyperbolic surfaces, scattering matrix, resonances, eigenvalues, Neumann-to-Dirichlet map}
}%
\author{%
Michael Levitin
\thanks{%
{\bf ML}: Department of Mathematics and Statistics, University of Reading, Reading RG6 6AX, UK; 
\href{mailto:m.levitin@reading.ac.uk}{\texttt{m.levitin@reading.ac.uk}}; 
\href{http://michaellevitin.net}{\texttt{michaellevitin.net}}%
}
\and 
Alexander Strohmaier
\thanks{%
{\bf AS}: School of Mathematics, University of Leeds, Leeds, LS2 9JT, UK; 
\href{mailto:a.strohmaier@leeds.ac.uk}{\texttt{a.strohmaier@leeds.ac.uk}}; 
\href{https://physicalsciences.leeds.ac.uk/staff/80/professor-alexander-strohmaier}{\texttt{physicalsciences.leeds.ac.uk/staff/80/professor-alexander-strohmaier}}%
}
}

\date{\small 30 December 2018; revised 17 June 2019; to appear in \emph{Int. Math. Res. Notices}}

\maketitle

\begin{abstract}
In this paper we describe a simple method that allows for a fast direct computation of the scattering matrix for a surface with hyperbolic cusps from the Neumann-to-Dirichlet map
 on the compact manifold with boundary obtained by removing the cusps. We illustrate that even if the Neumann-to-Dirichlet  map is obtained by a Finite Element Method (FEM) one can achieve good accuracy for the scattering matrix. We give various interesting examples of how this can be used to investigate the behaviour of resonances
under conformal perturbations or when moving in Teichm\"uller space. For example, based on numerical experiments we rediscover the four arithmetic surfaces of 
genus one with one cusp. This demonstrates that it is possible to identify arithmetic objects using FEM.\end{abstract}

\tableofcontents

\listoffigures
 
\listoftables

\listofvideos

\section{Introduction and setup}

Suppose that $\mathbb{H}=\{z=x+\I y \mid y>0\}$ is the upper half-plane with the  hyperbolic (of constant curvature $-1$)
metric 
\[
y^{-2} (\dr x^2 + \dr y^2).
\] 
The Riemannian measure is then  $y^{-2} \dr x \dr y$ and the $L^2$-inner product
is given by
\[
 \langle f, g \rangle = \int f(z) \overline{g(z)} y^{-2} \dr x \dr y.
\]
The metric Laplace operator
\[
\Delta=\Delta_\mathbb{H} = -y^2 \left( \partial_x^2 + \partial_y^2 \right)
\]
is essentially self-adjoint with domain $C^\infty_0(\mathbb{H})$, and later on we do not distinguish notationally  operators and their closures, if there is no danger of confusion.	

The map $(x,y) \mapsto (x+1,y)$ is an isometry of the upper half-space ,
and the quotient of the set $\mathbb{H}_a=\{z=x+\I y \mid y>a\}$ by this isometry results in a so-called hyperbolic cusp with height $a>0$.
Thus, such a cusp $Z^a$ is topologically equivalent to $\bbS^1 \times [a,\infty)$
and it is equipped with a metric of constant negative curvature.

\begin{figure}[htb!]
 \begin{center}\includegraphics{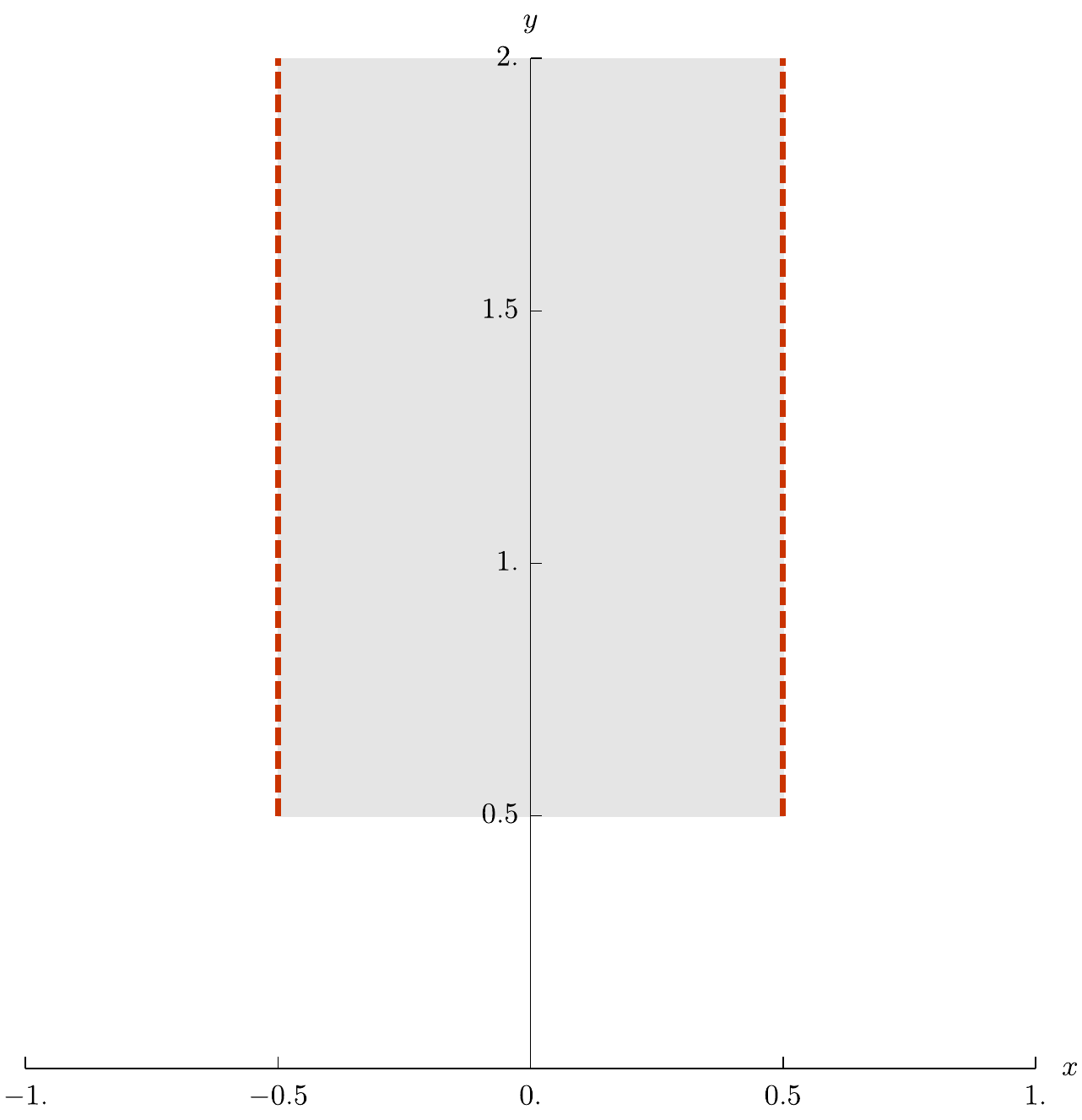}
 \caption[Fundamental domain of a cusp in $\mathbb{H}$]{Fundamental domain of a cusp in $\mathbb{H}$. The two parallel sides are identified}
 \label{fig1}
 \end{center}
\end{figure}

\begin{figure}[htb!]
 \begin{center} 
 \includegraphics{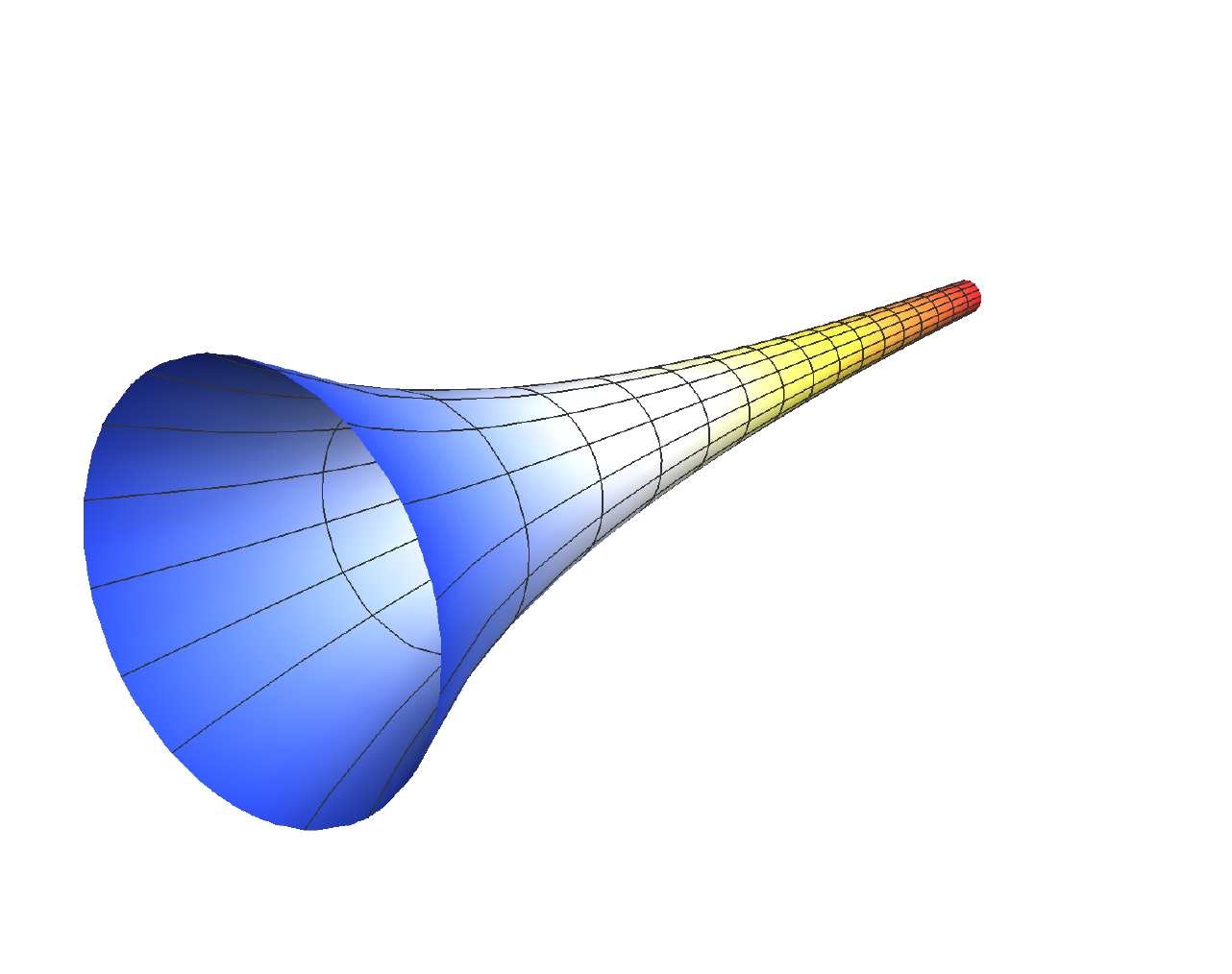}
 \caption{Part of a cusp isometrically embedded into $\mathbb{R}^3$}
 \label{fig2}
 \end{center}
\end{figure}

Figure \ref{fig1} shows a fundamental domain that becomes $Z^a$ when the parallel sides are identified.
Of course the space of smooth functions $C^\infty(Z^a)$ on $Z^a$ can be identified with
smooth functions on $\mathbb{H}$ periodic in $x$ (with period one) and similarly, $L^2(Z^a)$ can be identified with the set of measurable
functions $f(z)$ on $\mathbb{H}$, periodic in $x$ (with period one),  such that the $L^2$-norm
\[
 \int_a^\infty \int_{-1/2}^{+1/2} | f(z) |^2 y^{-2} \dr x \dr y
\]
is finite.
We will in the following use these identifications without further mention.
The Neumann Laplace operator $\Delta_{Z^a}$ on the cusp $Z^a$ is obtained by imposing Neumann boundary condition
on the operator $\Delta= -y^2 \left( \partial_x^2 + \partial_y^2 \right)$ on the boundary 
$(\RZ)\times \{a\}$. This operator is self-adjoint and has spectrum consisting of an absolutely continuous part
$[\frac{1}{4},\infty)$ and of a discrete set of non-negative eigenvalues with finite dimensional eigenspaces
(see for example \cite{MR725778} or \cite{MR1942691} and references there).

Suppose that $X$ is a complete two-dimensional Riemannian manifold (or orbifold with finitely many isolated orbifold singularities)
that is either a hyperbolic cusp or a disjoint union of hyperbolic cusps outside of a compact region.
Thus, we are assuming that $X$ has the form
\[
 X = M \cup_{\partial Z} Z,\quad Z=Z_1 \sqcup \cdots \sqcup Z_p, \quad Z_k = (\RZ) \times [a_k,\infty)
\]
such that the Riemannian metric $g$ on $X$ restricted to a neighbourhood of the cusp $Z_k$ is the hyperbolic
metric defined above
(see Figure \ref{fig3}). 

\begin{figure}[htb!]
\begin{center} 
\includegraphics{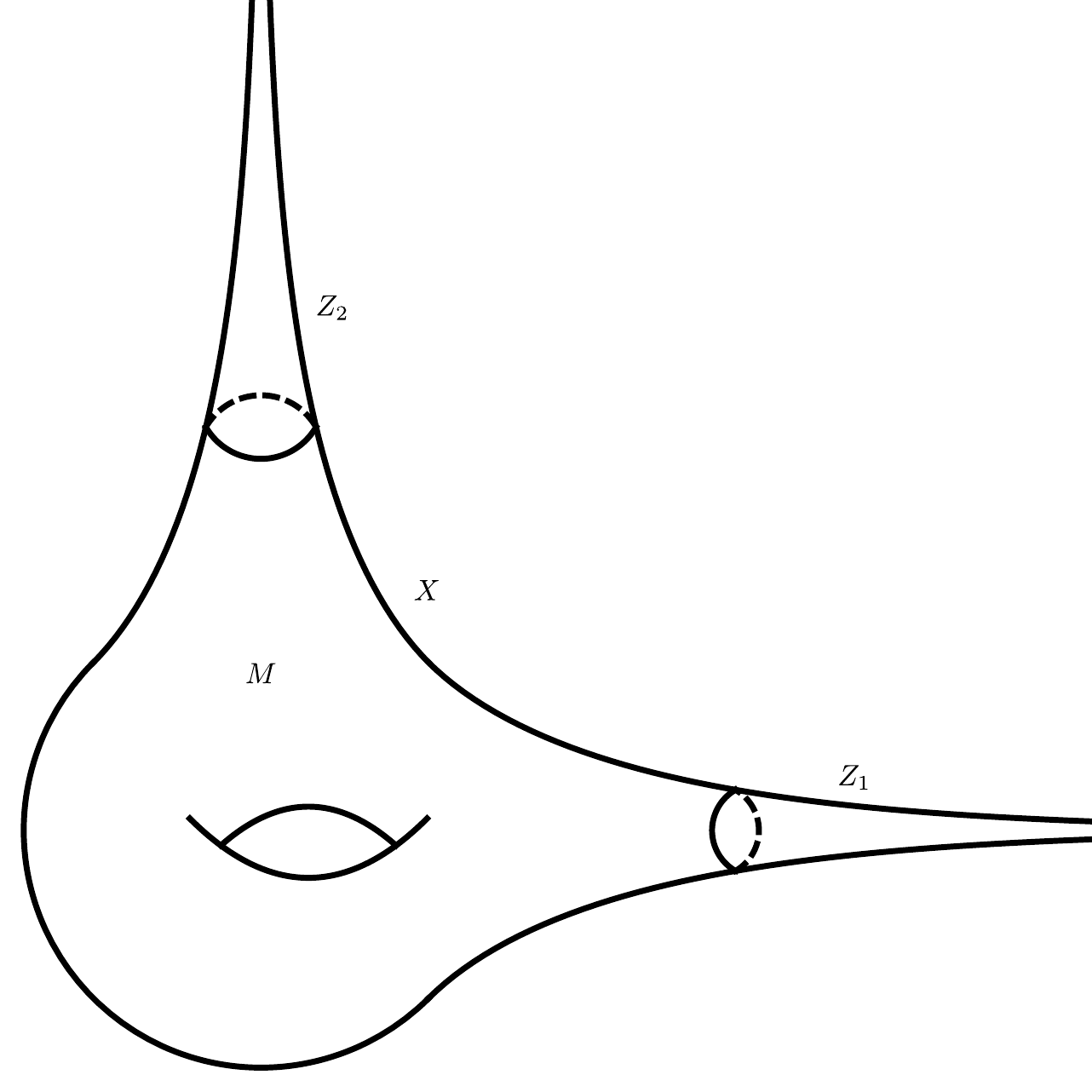}
\caption{A surface of genus one with two hyperbolic cusps}
\label{fig3}
\end{center}
\end{figure}

Now assume that $P$ is a formally self-adjoint differential operator of Laplace type on $X$ acting on functions (which means that $P=-g^{ij} \partial_i \partial_j + \text {lower order terms}$), and 
let $\Delta$ be the Laplace operator acting on functions on $X$. 
Thus, $P-\Delta$ is a first order operator and we will assume that $P-\Delta$ is compactly supported away from each $Z_k$.
The simplest example would be
\[
 P = \Delta + V(x),
\]
where $V \in C^\infty(X)$ is a potential that is supported in the interior of $M$. However, we do not want to exclude more
general cases here. Since  $X$ is complete the operator 
$P$ is essentially self-adjoint on $C^\infty_0(X)$. 

\begin{remark}All our formulae and conclusions hold true with the obvious
modifications if $M$ has additional boundary components and/or conical singularities away from cusps $Z$,  and appropriate elliptic boundary conditions are imposed there to make $P$ a self-adjoint operator.
\end{remark}

Manifolds with such cusps were considered and analysed in 
\cite{MR725778} and \cite{muller1992spectral} and the gluing constructions for the heat kernel carry over to our setting.
The structure of the spectrum and the generalised eigenfunctions can also be inferred from the meromorphic continuation of the
resolvent. This approach can be found for example in \cite{guillope1995upper}. In the following we summarise the known results.

As the Neumann Laplace operator on $Z^a$, the operator $P$
has spectrum consisting of the absolutely
continuous part $[1/4,\infty)$ of multiplicity $p$ and, maybe, eigenvalues of finite multiplicity.
As usual the resolvent $(P-\lambda)^{-1}$ is often more conveniently described using other parameters $s$ and $t$
which are related to the spectral parameter $\lambda$ in the following way,
\begin{gather*}
 \lambda = s (1-s),\\
 s = \frac{1}{2} + \I \; t,\\
 \lambda = \frac{1}{4} + t^2.
\end{gather*}
The set on which the resolvent is naturally defined as a meromorphic function with values in the space of bounded operators is 
$\mathbb{C} \backslash [\frac{1}{4},\infty)$ in terms of $\lambda$, the half-plane $\Re(s)>1/2$ in terms of $s$, 
and the lower half-space in terms of the parameter $t$.
The resolvent,
\[
 (P-\lambda)^{-1}=(P - s(1-s))^{-1},
\]
viewed as an operator from $L^2_\text{comp}(X)$ to $H^2_\text{loc}(X)$, admits a meromorphic continuation as a function of $s$ 
to the entire complex plane with poles of finite rank (that is, all the negative Laurent coefficients are finite rank operators).  These poles correspond to eigenvalues and so-called scattering
resonances. 

The generalised eigenfunction $E_j(z,s)$ of the operator 
$P$, attached to the cusp $Z_j$, can be constructed from the resolvent
and therefore admits a meromorphic continuation to $\mathbb{C}$ as a function of $s$.
When restricted to $Z_k$ it is of the form
\begin{gather} \label{ge-expansion}
 \left.E_j(z,s)\right|_{Z_k} = \delta_{j,k} y_k^s + C_{j,k}(s) y_k^{1-s} + T_{j,k}(z_k,s),
\end{gather}
where $T_{j,k}(z,s)$ is in $L^2(Z_k)$. Here $z_k = x_k + \I\, y_k$ denotes the coordinates on the cusp $Z_k$.
Both $C(s)$ and $T(z,s)$ are meromorphic matrix-valued functions of $s$
in the entire complex plane. The matrix-valued meromorphic function $C(s)$ is defined by \eqref{ge-expansion} and is normally referred to as
the scattering matrix. It satisfies the relations
\begin{gather}
 \overline{C(s)} =  C^*(s) = C(\overline s),\label{unitary}\\ 
 C(s) C(1-s) =\id, \label{functionaleq}
\end{gather}
implying that it is unitary on the absolutely continuous spectrum.

Since $Z^a$ has a natural $\bbS^1$-action we can decompose, in the case $P=\Delta$, the solutions of 
\[
(\Delta - s(1-s)) f(z) =0
\]
into the Fourier modes $f(z) = \sum\limits_{m \in \mathbb{Z}} f_m(y) e^{2 \pi \I m x}$ that satisfy
\[
 \left( - y^2 \frac{d^2}{dy^2} + 4 \pi^2 m^2 y^2 - s(1-s) \right) f_m(y) =0.
\]
For $m=0$ and $s\ne\frac{1}{2}$ this implies that $f_0(y)$ is a linear combination of $y^s$ and $y^{1-s}$.
For $m \not= 0$ the general solution of this ODE can be expressed in terms of Bessel functions.
Then we obtain
\[
 T_{j,k}(z,s) = \sqrt{y} \sum_{m \in \Z \backslash \{0\}} a_{m,j,k}(s) K_{\I t}(2 \pi |m| y) e^{2 \pi \I m x},
\]
where convergence is in $C^\infty(Z^a)$. Here $K_\nu$ is the modified Bessel $K$-function of order $\nu$.

Poles of the scattering matrix $C(s)$ are called resonances. Resonances correspond to poles
of the generalised eigenfunctions $E_j(z,s)$ and the coefficient of the lowest term in the Laurent expansion
at a resonance $s_r$ is proportional to a function $f_r \in C^\infty(X)$
such that
\[
 \left( P- s_r(1-s_r) \right) f_r =0,
\]
and
\[
 \left.f_r \right|_{Z_k} = a_{r,k} y_k^{1-s_r} + R_{r,k}(z_k),
\]
where $R_{r,k}$ is exponentially decaying as $y_k \to \infty$.
The function $f_r$ is sometimes referred to as the resonant state at the resonance $s_r$.

\section{Plan of the paper and discussion of the results}

The main aim of this paper is to demonstrate that the domain decomposition using the Neumann-to-Dirichlet map  leads to a simple  and fast  numerical scheme allowing to compute the scattering matrix on spaces with hyperbolic cusps. 

The paper is structured as follows.

In Sections \ref{sec:NDM} and \ref{sec:NDZ} we construct the Neumann-to-Dirichlet maps on the compact part of a hyperbolic surface and on the cusps, respectively.   
In Section \ref{ndscatter} we show that the scattering matrix can be extracted from the Neumann-to-Dirichlet operator of a compact part of a hyperbolic surface with cusps by means of simple linear algebra methods. In particular, if a numerical approximation of the Neumann-to-Dirichlet map at a spectral point is provided by any method, fast and standard linear algebra routines can be used to extract the scattering matrix. In Section \ref{numsec} we show that in fact standard finite element methods  are already sufficient to calculate the scattering matrix, and hence the scattering resonances, with good accuracy if the spectral parameter is not too large. Various examples of constant but also non-constant curvature are treated and discussed in detail in Section \ref{examples}. We compare them to known values for arithmetic surfaces as computed for example by Winkler (\cite{winkler1988cusp}) and Hejhal (\cite{hejhal92}).
       Since our method is extremely fast and flexible we were able to produce moving pictures that show how scattering resonances move 
       with conformal perturbations or in Teichm\"uller space. Figures illustrating this are included in Section \ref{examples}. In particular, in genus one case we identified several surfaces for which the scattering matrix is expressible in terms of the Riemann zeta function.       
       These surfaces correspond to the four arithmetic surfaces known to exist in genus one with one cusp. It seems that these arithmetic surfaces are the only ones (up to isomorphism) for which the resonances are lined up along critical lines.
              
       For surfaces of constant negative curvature there are direct fast converging methods that allow the computation of embedded eigenvalues and scattering resonances. For example Hejhal's algorithm can be used to compute embedded eigenvalues with extreme accuracy (see for example  \cite{MR2249995}, see also \cite{BSVdata}), and is used  to compute large numbers of high lying eigenvalues (for example \cite{JST13} for arithmetic examples).
       Variations have also been used to track resonances (for example \cite{MR2164106,MR2261026, MR2731549}). Our approach is different in that it treats the compact part as a black-box and also allows for perturbations away from constant curvature. The correspondence between the scattering matrix and the Neumann-to-Dirichlet map can be used to relate number theoretic questions to transmission problems. This approach was taken independently in \cite{CakoniChanillo} in the context of quotients of hyperbolic space by Fuchsian groups and leads to a  reformulation of the Riemann hypothesis in terms of transmission eigenvalues.
       
We would like to point out that numerical instabilities leading to spurious eigenvalues or eigenvalues being missed seem to be absent in our approach.
       We give several tables comparing our results to known computations in arithmetic constant curvature situations.

\section{The Neumann-to-Dirichlet operator on $\partial M$}\label{sec:NDM}

The operator $P$ is a formally self-adjoint elliptic differential operator on $X$ that coincides with the Laplace operator
near the boundary of $M$. 
Therefore, we have Green's formula
\[
 \langle P \psi, \phi \rangle - \langle \psi, P \phi \rangle = 
 \int_{\partial M} \left( \psi(z)  \frac{\partial \overline\phi}{\partial \bf n}(z) - 
  \frac{\partial \psi}{\partial \bf n}(z) \overline \phi(z) \right) \dr z
\]
for all $\phi,\psi \in C^\infty(M)$. 
In our case the boundary $\partial M$ is a disjoint union of components $\partial M_k=\partial Z_k$ each of which
is isometric to the circle. We therefore have
\[
 \int_{\partial M} \left( \psi(z)  \frac{\partial \overline \phi}{\partial \bf n}(z) - 
  \frac{\partial \psi}{\partial \bf n}(z) \overline \phi(z) \right) \dr z = \sum_{k=1}^p  \int_{\partial M_k} \left( \psi(z)  \frac{\partial \overline \phi}{\partial \bf n}(z) - 
  \frac{\partial \psi}{\partial \bf n}(z) \overline \phi(z) \right) \dr z.
\]
Given a particular boundary component $\partial M_k$ we can choose coordinates $(x,y)$ such that
the cusp $Z_k$ corresponds to $S^1 \times [a_k,\infty)$. In this case
$\frac{1}{a_k}dx$ is the natural Riemannian measure induced
by the metric on the boundary and $a_k \frac{\partial}{\partial y}$ is the unit normal vector field. We therefore have
\[
 \int_{\partial M_k} \left( \psi(z)  \frac{\partial \overline \phi}{\partial \bf n}(z) - 
  \frac{\partial \psi}{\partial \bf n}(z) \overline \phi(z) \right) \dr z=  \int_{-\frac{1}{2}}^{\frac{1}{2}} \left.\left( \psi(z)  a_k \frac{\partial \overline \phi}{\partial y}(z) - 
  a_k \frac{\partial \psi}{\partial y}(z) \overline \phi(z) \right)\right|_{y=a_k} \frac{1}{a_k}\, \dr x.
\]
We can hence construct another self-adjoint operator $P_\text{Neu}$ on $L^2(M)$ by restricting $P$ to $M$ 
and imposing Neumann boundary conditions at the boundary $\partial M$.
Since $P_\text{Neu}$ is self-adjoint and elliptic
there exists an orthonormal basis in $L^2(M)$ consisting of smooth eigenfunctions
$(\Phi_j)_{j \in \mathbb{N}}$ such that 
\begin{gather*}
 P_\text{Neu} \Phi_j = \lambda_j \Phi_j,\\
 \left. \frac{\partial \Phi_j}{\partial \bf n}\right|_{\partial M} = 0,
 \end{gather*}
where $ \lambda_1 \leq \lambda_2 \leq \ldots \to \infty$ are the corresponding eigenvalues.

If $\lambda\in\mathbb{C}$ is not a Neumann eigenvalue then for each $f \in C^\infty(\partial M)$ there exists a unique
function $\psi \in C^\infty(M)$ such that
\begin{equation}\label{eq:nonhomNeu}
\begin{split}
(P - \lambda) \psi &= 0,\quad\text{ in } M,\\
\left. \frac{\partial \psi}{\partial \bf n}\right|_{\partial M}  &= f.
\end{split}
\end{equation}
The so-called Neumann-to-Dirichlet operator $\mathcal{N}^{M}(s) : C^\infty(\partial M) \to C^\infty(\partial M)$ is defined as
\[
 \mathcal{N}^{M}(s) f := \psi \vert_{\partial M},
\]
where $\psi \in C^\infty(M)$ is the solution of \eqref{eq:nonhomNeu}.
 
Separating between the different boundary components the Neumann-to-Dirichlet map can also be thought of as a matrix
of operators
$\mathcal{N}^{M}_{kj}(s) : C^\infty(\partial M_j) \to C^\infty(\partial M_k)$.
It is well known that $\mathcal{N}^M(s)$ is a pseudodifferential operator
of order $-1$ whose full symbol depends only on the germ of the metric near the boundary  (see \cite{MR1029119} in case $s=0$, but the proof given there works in general).
In particular the off-diagonal terms of the matrix $\mathcal{N}^{M}_{kj}(s)$ are smoothing operators
and the diagonal ones are pseudodifferential operators of order $-1$ acting on $C^\infty(\partial M_j)$.
Using Green's formula one easily obtains
\begin{equation}\label{eq:ND}
\mathcal{N}^M(s) f = \sum_{j} \frac{1}{\lambda_j - s(1-s)} \langle f, \phi_j \rangle_{L^2(\partial M)}\; \phi_j,
\end{equation}
where $\phi_j = \Phi_j|_{\partial M}$ are the restrictions of the Neumann eigenfunctions $\Phi_j$ to the boundary $\partial M$ of $M$
and the sum converges in $H^{1/2}(\partial M)$ (see \cite{Levitin2008}). Taking differences one obtains
\begin{equation}\label{eq:NDdiff}
\left( \mathcal{N}^M(s) -\mathcal{N}^M(s_0) \right) f = \sum_{j} \frac{s_0(1-s_0)- s(1-s)}{(\lambda_j - s(1-s))(\lambda_j - s_0(1-s_0))}\langle f, \phi_j \rangle_{L^2(\partial M)}\; \phi_j.
\end{equation}
This converges in $H^{3/2}(\partial M)$ uniformly with respect to the $H^2(\partial M)$-norm of $f$.
In particular, $\mathcal{N}^M(s)$ is a meromorphic family of pseudodifferential operators
of order $-1$ with first order poles at $s_j$  that are related to the Neumann eigenvalues $\lambda_j$ of $P_\text{Neu}$ by $\lambda_j = s_j(1-s_j)$.  
The family of operators $\mathcal{N}^M(s) $ is hence completely determined
by the data $(\phi_j, \lambda_j)_{j \in \mathbb{N}}$.

\section{The Neumann-to-Dirichlet operator on cusps}\label{sec:NDZ}

Since the $Z^a$ admits an $S^1$-action the space $L^2(Z^a)$ every function $f \in L^2(Z^a)$
may be decomposed into Fourier modes
\[
 f(z) = \sum_{m \in \mathbb{Z}} f_m(y) e_m(x),
\]
where $e_m(x) = e^{2 \pi \I  m x}$.
The functions with vanishing zero Fourier coefficients form a sub-space in $L^2(Z^a)$, the so called
cuspidal functions
\[
 L^2_{\mathrm{cusp}}(Z^a) = \{f \in L^2(Z^a) \mid f_0(y) = 0 \; \mathrm{a.e.}\}.
\]
The orthogonal complement $L^2_0(Z^a)$ of $L^2_{\mathrm{cusp}}(Z^a)$ is then the space of functions that do not depend on $x$.
This space is canonically isomorphic to $L^2((a,\infty), y^{-2} dz)$. The Neumann Laplace operator leaves both spaces invariant. Its restriction
to $L^2_0(Z^a)$ has absolutely continuous spectrum $[\frac{1}{4},\infty)$ and the restriction to $L^2_{\mathrm{cusp}}(Z^a)$
has purely discrete spectrum consisting of eigenvalues of finite multiplicity accumulating at $\infty$.
If $\lambda=s(1-s)$ is not a eigenvalue of the Neumann Laplace operator on $L^2_{\mathrm{cusp}}(Z^a)$ then for each
$f \in L^2(S^1)$ with $\int\limits_{-1/2}^{1/2} f(x) \dr x =0$ there exists a unique function $\psi\in L^2(Z^a)$ such that
\begin{gather*}
 (\Delta - \lambda) \psi =0,\\
\left. -a \frac{\partial \psi }{\partial y}\right|_{y=a} = f.
\end{gather*}
We will define the cuspidal Neumann-to-Dirichlet operator $\mathcal{N}^{Z^a}(s) : C^\infty(S^1) \to C^\infty(S^1)$ as
\[
  \mathcal{N}^{Z^a}(s) ( f - \av(f)) := \psi|_{y=a},
\]
where 
\[
\av(f) := \int_{-1/2}^{1/2} f(x) \dr x.
\]
This operator has an explicit description in terms of Bessel functions. Namely, it follows directly from the expansion into Fourier modes that
for any $m \not=0$ we have
\[
   \mathcal{N}^{Z_k}(s) e_m = -\left( \frac{1}{2} + 2 \pi |m| a \frac{K'_{\I t}}{K_{\I t}}(2 \pi |m| a_k) \right)^{-1} e_m,
\]
and $\mathcal{N}^{Z_k}(s) e_0=0$.
Since the boundary of $M$ consists of a disjoint union of components $\partial M_k$ we can assemble the Neumann-to-Dirichlet operator to an operator
$\mathcal{N}^{c}(s)$ acting on $L^2(\partial M)= \bigoplus_{k=1}^p L^2(\partial M_k)$ by defining
\[
 \mathcal{N}^{c}(s) = \bigoplus_{k=1}^p  \mathcal{N}^{Z_k}(s) .
\]
In the same way the averaging operator can be assembled to a map $\av\!: L^2(\partial M) \to L^2(\partial M)$.

\section{The relation between the Neumann-to-Dirichlet operator and the scattering matrix}\label{ndscatter}

The generalised eigenfunctions $E_j(z,s) $ form a meromorphic family of functions
satisfying $( P- \lambda) E_j(z,s) =0$ on all of $M$. Hence, 
\[
\left.\mathcal{N}^{M}_{k l}(s) \left( a_l \frac{\partial}{\partial y_l} E_j(z_l,s)  \right) \right|_{\partial M_l}  = \left.E_j(z_k,s) \right|_{\partial M_k}.
\]
On the other hand the restriction of $E_j(z,s)$ to each cusp has an expansion of the form (\ref{ge-expansion}) with a decaying tail term. We therefore have
\[
\begin{split}
 &\mathcal{N}^{Z_k}(s) \left(\left.  - a_k \frac{\partial}{\partial y_k} ( E_j(z_k,s) - \delta_{j,k} y_k^s - C_{j,k}(s) y_k^{1-s})\right|_{\partial M_k}  \right) \\
 &\qquad= \left.\left(E_j(z_k,s) -  \delta_{j,k} y_k^s - 
 C_{j,k} (s) y_k^{1-s}\right)\right|_{\partial M_k}.
 \end{split}
\]
 This means in particular that $ \frac{\partial}{\partial \mathbf{n}} E_j(z,s) |_{\partial M}$  is in the kernel of the map  
\[
(\id - \av) \mathcal{N}^{M} + \mathcal{N}^{c}.
\]
 
 Note that the averaging map ${\av}\!: L^2(\partial M) \to L^2(\partial M)$ is the orthogonal projection onto the space of locally constant functions $L^2_0(\partial M)$ on $\partial M$.
 This space is naturally identified with $\mathbb{C}^p$, the $k$-th component being identified with the function value on the boundary component $\partial M_k$.

\begin{theorem}\label{main1}
 Suppose that $s \not=\frac{1}{2}$ is a complex number that is not a pole of $\mathcal{N}^{M}(s)$ or $\mathcal{N}^{c}(s)$, and not a pole
 of the scattering matrix $C(s)$. Suppose furthermore that $s(1-s)$ is not an $L^2$-eigenvalue of $P$. Then
 the kernel of the map $(\id - \av) \mathcal{N}^{M}(s) + \mathcal{N}^{c}(s)$ is $p$-dimensional and spanned by
 $\{ \frac{\partial}{\partial \mathbf{n}} E_k(z,s)|_{\partial M} \mid 1 \leq k \leq p\}$. \end{theorem}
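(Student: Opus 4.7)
The plan is to show that the span $\mathrm{span}\{\partial_{\mathbf{n}}E_k|_{\partial M} : 1 \leq k \leq p\}$, which is already contained in $\ker T(s)$ (where $T(s) := (\id-\av)\mathcal{N}^M(s) + \mathcal{N}^c(s)$) by the paragraph preceding the theorem, is exactly $p$-dimensional and exhausts the kernel. First I will verify linear independence of the $\partial_{\mathbf{n}}E_k|_{\partial M}$. If $\sum_k c_k \partial_{\mathbf{n}}E_k|_{\partial M} = 0$, set $F = \sum_k c_k E_k(\cdot,s)$; then $F$ satisfies $(P-\lambda)F = 0$ on $M$ with zero Neumann data, and since $s$ is not a pole of $\mathcal{N}^M(s)$, $\lambda = s(1-s)$ is not a Neumann eigenvalue of $P_\text{Neu}$, forcing $F|_M \equiv 0$. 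Unique continuation for $P-\lambda$ extends this to $F \equiv 0$ on $X$, after which reading the $y_k^s$-coefficient in \eqref{ge-expansion} of $F|_{Z_k}$ gives $c_k = 0$.

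Next, given $\phi \in \ker T(s)$, I will construct a unique $f \in C^\infty(X)$ solving $(P-\lambda)f = 0$, whose restriction to $M$ is the Neumann solution with data $\phi$, and whose restriction to each cusp $Z_k$ has the form $A_k y_k^s + B_k y_k^{1-s} + R_k$ with $R_k$ an exponentially decaying cuspidal tail. The cuspidal (mean-zero) parts of the Dirichlet traces on $\partial M_k$ agree by the kernel condition, and the cuspidal parts of the Neumann traces agree by construction via $\mathcal{N}^c$. The averaged Dirichlet and Neumann matching conditions on $\partial M_k$ then yield a $2\times 2$ linear system for $A_k, B_k$ whose determinant is $a_k(1-2s)$ and is nonzero since $s \ne \frac{1}{2}$; hence $A_k, B_k$ are uniquely determined. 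The assignment $\phi \mapsto f$ is injective because $f \equiv 0$ forces $\phi$, the outward Neumann data of $f|_M$, to vanish, so it suffices to show that the space $V$ of all such $f$ has dimension at most $p$.

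For the bound $\dim V \leq p$, I will apply Green's identity to any $f_1, f_2 \in V$ on the truncated domain $M_R = M \cup \bigcup_k Z_k^{[a_k,R]}$. The interior integral vanishes because $P f_i = \lambda f_i$, while the boundary integral at $y_k = R$, evaluated on the zero Fourier mode using the Wronskian $W(y^s, y^{1-s}) = 1-2s$, reduces to $(1-2s)\sum_k (A_1^{(k)} B_2^{(k)} - A_2^{(k)} B_1^{(k)})$, independent of $R$; the nonzero Fourier modes contribute products of $K_{\I t}(2\pi|m|R)$ and its derivative and vanish as $R \to \infty$ by exponential decay. Since $s \ne \frac{1}{2}$, this forces $\sum_k (A_1^{(k)} B_2^{(k)} - A_2^{(k)} B_1^{(k)}) = 0$ for all $f_1, f_2 \in V$; the coefficient map $V \to \mathbb{C}^{2p}$, $f \mapsto ((A_k, B_k))_k$, therefore lands in an isotropic subspace of the standard symplectic form on $\mathbb{C}^{2p}$, which has dimension at most $p$. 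This coefficient map is injective because any $f \in V$ with all $A_k = B_k = 0$ lies in $L^2(X)$, and by hypothesis $s(1-s)$ is not an $L^2$-eigenvalue of $P$. Combining yields $\dim \ker T(s) \leq \dim V \leq p$. The main technical subtlety will be controlling the $R \to \infty$ limit of the boundary integral cleanly, in particular the cross terms between the polynomial parts and the decaying tails and between tails at different cusps, which relies on the exponential decay of $K_{\I t}$ and uniform-in-$R$ convergence of the cuspidal Fourier series.
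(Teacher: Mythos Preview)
Your argument is correct and takes a genuinely different route from the paper for the upper bound on the kernel dimension. Both proofs begin the same way: given a kernel element, one glues the interior Neumann solution to a cuspidal tail plus a zero-mode $A_k y_k^s+B_k y_k^{1-s}$ to produce a global solution $f$ of $(P-\lambda)f=0$. The divergence is in how one then bounds the dimension. The paper pairs $f$ against each \emph{Eisenstein series} $E_j$ via Green's identity on $M_R$; the resulting relation $b_j=\sum_k a_k C_{jk}(s)$ shows that $f-\sum_k a_k E_k\in L^2(X)$, and the absence of an $L^2$-eigenvalue forces $f$ to be that specific combination of the $E_k$. Your approach instead pairs elements of $V$ against \emph{one another}: the same Green computation shows the coefficient map $f\mapsto((A_k,B_k))_k$ lands in an isotropic subspace for the standard complex symplectic form on $\mathbb{C}^{2p}$, whence $\dim V\le p$. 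Your argument is more intrinsic---it never invokes the scattering matrix $C(s)$ or uses the $E_j$ as test functions---and is the familiar ``Lagrangian boundary data'' mechanism from abstract scattering theory. The paper's argument is more constructive: it actually identifies which linear combination of the $E_k$ a given kernel element comes from, which is closer in spirit to Theorem~\ref{main2}. Your linear-independence step (via $\lambda$ not a Neumann eigenvalue on $M$ plus unique continuation) also differs slightly from the paper's (which instead argues that a dependence relation would produce a cuspidal Neumann eigenfunction on $Z$, contradicting that $s$ is not a pole of $\mathcal{N}^c$); both are valid.

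One small point to tighten: you assert $f\in C^\infty(X)$ but do not say why an arbitrary $\phi\in\ker T(s)\subset L^2(\partial M)$ is smooth. The paper handles this by observing that $(\id-\av)\mathcal{N}^M(s)+\mathcal{N}^c(s)$ is an elliptic pseudodifferential operator of order $-1$ (the two Neumann-to-Dirichlet maps have the same principal symbol, so the sum is elliptic), and elliptic regularity then gives $\phi\in C^\infty(\partial M)$. You should insert this step before constructing $f$; without it the gluing and the control of the Fourier tails in your $R\to\infty$ limit are not fully justified.
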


\begin{proof}
 The assumptions imply that the generalised eigenfunctions $E_j(z,s)$ exist at $s$.
 We have already shown that $ \frac{\partial}{\partial \mathbf{n}} E_j(z,s) |_{\partial M}$ is in the kernel of $(\id - \av) \mathcal{N}^{M}(s) + \mathcal{N}^{c}(s)$. Moreover, any non-zero linear combination of $E=\sum_j c_j E_j$ such that $\frac{\partial}{\partial \mathbf{n}} E_j(z,s) |_{\partial M}=0$ will give rise to an $L^2$-Neumann eigenfunction on $Z$ by taking the non-zero part of its Fourier expansion. Since we excluded Neumann eigenvalues on the cusp by the requirement that 
 $s$ is not a pole of $\mathcal{N}^c$,  the functions $ \frac{\partial}{\partial \mathbf{n}} E_j(z,s) |_{\partial M}$ are linearly independent.
Now suppose that $g \in L^2(\partial M)$ is in the kernel of $(\id - \av) \mathcal{N}^{M} + \mathcal{N}^{c}$.  Both $(\id - \av) \mathcal{N}^{M}$ and 
 $\mathcal{N}^{c}$ are elliptic pseudodifferential operators of order $-1$ and their principal symbols coincide. Hence, their sum is elliptic too and, by elliptic regularity,
 $g \in C^\infty(\partial M)$. 
This means that there is a function $F_M \in C^\infty(M)$ and a function $F_Z \in L^2(Z) \cap C^\infty(M)$
 such that
 \begin{gather*}
  (P-s(1-s)) F_M = 0, \quad  (P-s(1-s)) F_Z = 0,\\
  (\id - \av) F_M |_{\partial M}= F_Z|_{\partial M}, \quad \left.(\id - \av) \frac{\partial}{\partial \mathbf{n}} F_M \right|_{\partial M}= \left.-\frac{\partial}{\partial \mathbf{n}} F_Z\right|_{\partial M},\\
 \left.\frac{\partial}{\partial \mathbf{n}} F_M \right|_{\partial M}=g.
 \end{gather*}
 These equations imply that the functions $F_M$ and $F_Z$, when expanded into Fourier modes, have the same non-zero Fourier coefficients on each cusp.
 Hence,  for each cusp $Z_k$ there exist coefficients $a_k$ and $b_k$ such that the function
\[
  a_k y_k^s + b_k y_k^{1-s} + F_{Z_k}
\]
 has the same Fourier expansion as $F_M$ on $Z_k$. Therefore, we can construct a globally defined function $F \in C^\infty(X)$
 which agrees with $F_M$ on $M$, such that
\[
  F|_{Z_k}(z_k) =  a_k y_k^s + b_k y_k^{1-s} + F_{Z_k}(z_k).
\] 
 Now use Green's identity on a cut-off domain $M_R$ obtained by cutting off the cusps $Z_k$ at $y=R$ and use the fact that the tail term is exponentially decaying:
 \[
 \begin{split}
  0 &= \lim_{R \to \infty} \int_{M_R} \left( E_j(z,s) ( P- s(1-s) ) F(z) - F(z) ( P- s(1-s) ) E_j(z,s) \right) \dr z \\
  &= (1-s) b_j + s \sum_{k=1}^p a_k C_{j,k}(s) - (1-s) \sum_{k=1}^p a_k C_{j,k}(s) - s b_j\\
  &= (1-2 s) \left( b_j - \sum_{k=1}^p a_k C_{j,k}(s) \right).
 \end{split}
 \]
 Define $E(z):= \sum\limits_{k=1}^p a_k E_k(z,s)$. Then the above implies that $F-E \in L^2$. Since $s(1-s)$ was assumed not to be an $L^2$-eigenvalue
 we conclude that $F=E$.
  \end{proof}
 
 \begin{theorem} \label{main2}
 Suppose that $s \not=\frac{1}{2}$ is a complex number that is not a pole of $\mathcal{N}^{M}(s)$ or $\mathcal{N}^{c}(s)$, and not a pole
 of the scattering matrix $C(s)$. Suppose furthermore that $s(1-s)$ is not an $L^2$-eigenvalue of $P$. 
 Let $V$ be the kernel of the map $(\id - \av) \mathcal{N}^{M}(s) + \mathcal{N}^{c}(s)$ and define the  maps
 \[
 \begin{alignedat}{2}
  Q_1: V &\to \mathbb{C}^p, &\quad &g \mapsto \av(g),\\
  Q_2: V &\to \mathbb{C}^p, &\quad &g \mapsto \av(\mathcal{N}^M(g)).
 \end{alignedat}
 \]
 Then the map $(s-1) Q_2 + Q_1$ is invertible and 
\begin{equation}\label{eq:Cformula}
  C(s) = A^{s-1} (s Q_2 - Q_1) \left( (s-1) Q_2 + Q_1 \right)^{-1} A^{s},
\end{equation}
 where $A$ is the diagonal matrix $A=\operatorname{diag}(a_1,a_2,\ldots,a_p)$.
 \end{theorem}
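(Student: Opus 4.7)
The plan is to leverage the explicit basis $\{e_j := \frac{\partial}{\partial \mathbf n} E_j(\cdot,s)|_{\partial M}\}_{j=1}^{p}$ of $V$ supplied by Theorem \ref{main1} to represent $Q_1$ and $Q_2$ as concrete $p\times p$ matrices via the expansion \eqref{ge-expansion}, and then verify the claimed identity by a short algebraic cancellation. The crucial input is that the tail term $T_{jk}(z_k,s)$ in \eqref{ge-expansion} contains only non-zero Fourier modes, so both $T_{jk}$ and $\partial_{y_k} T_{jk}$ are annihilated by $\av$.

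First I would compute $\av(e_j)$ and $\av(\mathcal{N}^M(e_j))$ on each boundary component $\partial M_k$. Since the outward unit normal to $M$ at $\partial M_k$ equals $a_k\partial_{y_k}$, differentiating \eqref{ge-expansion} in $y_k$, evaluating at $y_k=a_k$, and averaging yields
\[
\av(e_j)|_{\partial M_k} = s\,\delta_{jk}\,a_k^s + (1-s)\,C_{jk}(s)\,a_k^{1-s}.
\]
Since $E_j$ solves $(P-s(1-s))\psi=0$ on $M$ with Neumann data $e_j$, uniqueness gives $\mathcal{N}^M(e_j)|_{\partial M_k} = E_j|_{\partial M_k}$, which averages to $\delta_{jk} a_k^s + C_{jk}(s) a_k^{1-s}$. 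Encoding these as $p\times p$ matrices with rows indexed by the observer component $k$ and columns by $j$, and using the symmetry $C(s)^T=C(s)$ that follows from \eqref{unitary}, I would obtain
\[
Q_1 = sA^s + (1-s) A^{1-s} C(s), \qquad Q_2 = A^s + A^{1-s} C(s).
\]

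The proof then concludes with an elementary telescoping. Forming the two combinations appearing in the statement gives
\[
(s-1)Q_2 + Q_1 = (2s-1)A^s, \qquad s Q_2 - Q_1 = (2s-1) A^{1-s} C(s),
\]
so $(s-1)Q_2+Q_1$ is invertible (since $s\neq\frac{1}{2}$ and $A$ is positive diagonal), and substituting into the right-hand side of \eqref{eq:Cformula} gives
\[
A^{s-1}(sQ_2-Q_1)\bigl((s-1)Q_2+Q_1\bigr)^{-1}A^s = A^{s-1}\cdot A^{1-s}C(s)A^{-s}\cdot A^s = C(s),
\]
as required. The main bookkeeping hazards are the sign and scale conventions for the normal derivative, distinguishing the two indices of $C_{jk}(s)$, and invoking the symmetry of $C(s)$ at the right moment; beyond that the computation is completely mechanical.
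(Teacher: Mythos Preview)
Your proof is correct and follows essentially the same route as the paper: both use Theorem~\ref{main1} to pick the basis $\{\partial_{\mathbf n}E_j|_{\partial M}\}$ of $V$, compute $Q_1$ and $Q_2$ componentwise from the expansion \eqref{ge-expansion}, and observe that the two combinations $(s-1)Q_2+Q_1$ and $sQ_2-Q_1$ collapse to $(2s-1)A^s$ and $(2s-1)A^{1-s}C(s)$ respectively. The only cosmetic difference is that you make explicit the step where the symmetry $C(s)^T=C(s)$ (a consequence of \eqref{unitary}) is used to align the indices when passing from the componentwise formulas to the matrix identity, whereas the paper simply writes ``the formula is immediately implied''.
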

 
 \begin{proof}
   By the previous theorem we can use the basis $\left\{ \phi_k := \frac{\partial}{\partial \mathbf{n}} E_k(z,s)|_{\partial M} \mid 1 \leq k \leq p\right\}$ in $V$ to check the
   invertibility of the map $(s-1) Q_2 + Q_1$ and the formula for the scattering matrix. By the expansion (\ref{ge-expansion}) we
   have
   \[
   \begin{split}
    (Q_1 \phi_j)_k &= s \delta_{j,k} a_k^{s} + (1-s) C_{j,k}(s) a_k^{1-s},\\
    (Q_2 \phi_j)_k &= \delta_{j,k} a_k^{s} + C_{j,k}(s) a_k^{1-s}.
   \end{split}
   \]
  Therefore, $\left(((s-1) Q_2 + Q_1) \phi_j \right)_k= (2s-1) \delta_{j,k}  a_k^{s}$ and the right hand side is a non-singular matrix. Moreover,
   $\left((s Q_2 - Q_1) \phi_j \right)_k= (2s-1) C_{j,k}(s)  a_k^{1-s}$. The formula \eqref{eq:Cformula} is immediately implied by this.
 \end{proof}
 
In the case of one cusp the above reduces to a generalised eigenvalue problem.

\begin{cor}
 Assume $X$ has one cusp, i.e. $p=1$, and suppose that $s \not=\frac{1}{2}$ is a complex number that is not a pole of $\mathcal{N}^{M}(s)$ or 
 $\mathcal{N}^{c}(s)$, and not a pole of the scattering matrix $C(s)$. Then either the pair  $(\mathcal{N}^{M}(s) + \mathcal{N}^{c}(s), \av)$  has precisely one
 generalised eigenvalue $G(s)$, or $C(s) = \frac{s}{s-1} a^{2s-1}$. In the former case the scattering matrix can be computed from this eigenvalue as
\[
  C(s) = (s G(s) -1) \left( (s-1) G(s) + 1 \right)^{-1} a^{1-2s}.
\]
 \end{cor}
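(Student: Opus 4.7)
The plan is to specialize Theorems \ref{main1} and \ref{main2} to the one-cusp situation $p=1$. By Theorem \ref{main1} the kernel $V$ of $(\id - \av)\mathcal{N}^M(s) + \mathcal{N}^c(s)$ is one-dimensional, spanned by $\phi := \frac{\partial}{\partial \mathbf{n}} E_1(z,s)|_{\partial M}$; correspondingly $Q_1$ and $Q_2$ reduce to scalar functionals on $V$, and the matrix $A$ collapses to the scalar $a$.

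First I would identify $V$ with the set of generalized eigenvectors of the pair $(\mathcal{N}^M(s) + \mathcal{N}^c(s), \av)$. The defining relation of $V$ can be rewritten as
\[
(\mathcal{N}^M + \mathcal{N}^c) g = \av(\mathcal{N}^M g),
\]
so any $g \in V$ with $\av(g) = Q_1(g) \ne 0$ satisfies $(\mathcal{N}^M + \mathcal{N}^c) g = G(s)\,\av(g)$ for $G(s) := Q_2(g)/Q_1(g)$. Conversely, suppose $(\mathcal{N}^M + \mathcal{N}^c) v = \lambda\, \av(v)$ for some nonzero $v$ and some $\lambda \in \mathbb{C}$. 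Applying $(\id - \av)$ annihilates the right-hand side because $\av$ is a projection onto (piecewise) constants, while the explicit Bessel-function formula for $\mathcal{N}^c$ shows its range to lie in the zero-average subspace, so $(\id - \av)\mathcal{N}^c v = \mathcal{N}^c v$. Hence $(\id - \av)\mathcal{N}^M v + \mathcal{N}^c v = 0$, which means $v \in V$, and one-dimensionality of $V$ then forces uniqueness of the eigenvalue.

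It remains to separate the two cases using the explicit scalar expressions
\[
Q_1(\phi) = s a^s + (1-s) C(s) a^{1-s}, \qquad Q_2(\phi) = a^s + C(s) a^{1-s}
\]
computed inside the proof of Theorem \ref{main2}. If $Q_1(\phi) = 0$, solving the first relation for $C(s)$ yields $C(s) = \tfrac{s}{s-1} a^{2s-1}$; a short computation then gives $Q_2(\phi) = a^s(1-2s)/(1-s) \ne 0$ (using $s \ne \tfrac{1}{2}$), so no $\lambda$ can satisfy $Q_2(\phi) = \lambda Q_1(\phi)$ and no generalized eigenvalue exists. If $Q_1(\phi) \ne 0$, the unique generalized eigenvalue is $G(s) = Q_2(\phi)/Q_1(\phi)$, and the stated closed-form for $C(s)$ follows by substituting $Q_2 = G(s) Q_1$ into \eqref{eq:Cformula}, all factors being scalar, and dividing numerator and denominator by $Q_1(\phi)$.

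The step requiring the most care is the reverse implication that every generalized eigenvector lies in $V$; this hinges on the zero-average property of the range of $\mathcal{N}^c$. Everything else is a scalar substitution into identities already established in the proofs of Theorems \ref{main1} and \ref{main2}.
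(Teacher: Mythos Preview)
Your argument is correct and is precisely the specialisation of Theorems~\ref{main1} and~\ref{main2} to $p=1$ that the paper has in mind; the only point requiring any work---that every generalised eigenvector of $(\mathcal{N}^M+\mathcal{N}^c,\av)$ already lies in $V$, via $(\id-\av)\mathcal{N}^c=\mathcal{N}^c$---you supply cleanly. One minor remark: the corollary as printed omits the hypothesis that $s(1-s)$ is not an $L^2$-eigenvalue of $P$, which both theorems you invoke require and without which the dichotomy can fail (a cusp-form eigenfunction would give a nonzero $g\in V$ with $Q_1(g)=Q_2(g)=0$, making every scalar a generalised eigenvalue); this is an oversight in the statement rather than in your proof.
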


\section{Numerical computation of the scattering matrix, resonances and embedded eigenvalues} \label{numsec}
\subsection{Scattering matrix and resonances}
Theorems \ref{main1} and \ref{main2} yield an extremely simple and fast algorithm to compute the scattering matrix, resonances or eigenvalues
for the situation described above.
In this section we will assume that $s \not=\frac{1}{2}$ is a 
complex number that is not a pole of $\mathcal{N}^{M}(s)$ or $\mathcal{N}^{c}(s)$, and not a pole of the scattering matrix $C(s)$.
In the following we take $(e_m)_{m \in \mathbb{Z}}$ to be the orthonormal basis of $L^2(\R/\Z,dx)$ consisting of Fourier modes $e_m(x) = e^{2 \pi \I m x}$.
Since each boundary component $\partial M_k$ can be identified with a circle, this gives an orthonormal basis $(e_{m,k})_{m \in \mathbb{Z}, \;k =1,\ldots, p}$
in $L^2(\partial M)$. We will write $(e_\alpha)_{\alpha\in I}$ where the index set for $\alpha=(\alpha_1,\alpha_2)$ is  $I:=\mathbb{Z} \times \{1,\ldots,p\}$.

The boundary data $(\phi_j, \lambda_j)_{j \in \mathbb{N}}$ of Neumann eigenvalues can be used to compute the matrix elements of
the Neumann-to-Dirichlet operator $\mathcal{N}^M(s)$ using \eqref{eq:ND} and the Fourier expansion in the basis  $(e_\alpha)$ giving
\begin{equation}\label{eq:ND1}
 \mathcal{N}^{M}_{\alpha,\beta}(s) = \langle \mathcal{N}^M(s) e_\alpha , e_\beta \rangle_{L^2(\partial M)} = \sum_{j} \frac{1}{\lambda_j - s(1-s)} 
 \langle e_\alpha, \phi_j \rangle \langle \phi_j, e_\beta \rangle_{L^2(\partial M)}.
\end{equation}
Using \eqref{eq:NDdiff}, convergence in \eqref{eq:ND1} is accelerated if we compute the matrix elements  $\mathcal{N}^{M}_{\alpha \beta}(s_0)$ directly at a single particular value $s_0$, cf. \cite{Levitin2008}. Then
\begin{equation}\label{eq:ND2}
 \mathcal{N}^{M}_{\alpha,\beta}(s)- \mathcal{N}^{M}_{\alpha,\beta}(s_0)=\sum_{j} \frac{s_0(1-s_0)- s(1-s)}{(\lambda_j - s(1-s))(\lambda_j - s_0(1-s_0))}
 \langle e_\alpha, \phi_j \rangle \langle \phi_j, e_\beta \rangle_{L^2(\partial M)},
 \end{equation}
 and the series in \eqref{eq:ND2} converges more rapidly than the one in \eqref{eq:ND1}. The acceleration trick may be repeated if one computes directly $\mathcal{N}^{M}_{\alpha,\beta}(s_j)$ for several particular values of $s_j$.

The matrix elements of $\mathcal{N}^{Z^a}(s)$ are simply
\begin{equation}\label{eq:Ncusp}
 \mathcal{N}^{Z^a}_{\alpha,\beta}(s)  = 
 \begin{cases}
 0&\quad\text{if }\alpha\ne\beta,\\
 (1- \delta_{m,0}) \left( \frac{1}{2} + 2 \pi |m| a_k \frac{K'_{\I t}}{K_{\I t}}(2 \pi |m| a_k) \right)^{-1}&\quad\text{if }\alpha=\beta=(m,k).\
 \end{cases}
\end{equation}
Moreover, $\av_{\alpha,\beta} = \delta_{\alpha_1,0} \delta_{\beta_1,0}$. 

We would then like to find the $p \times p$ matrix $G(s)$ such that
\[
  \dim \mathrm{ker} \left(  \mathcal{N}^{M}(s) + \mathcal{N}^{c}(s) - G(s) \av \right)  = p.
\]
The idea of the numerical approximation is of course to truncate this Fourier basis and approximate the above matrices by finite matrices
by considering only $0 \leq |\alpha_1|, |\beta_1| \leq J$ for some large integer $J$.
We denote by $\tilde{N}^M$, $\tilde {N}^c$, and $\tilde{\av}$ the finite matrices obtained from truncating the
Fourier expansion at $J$. Then these matrices are $(2 J +1) p  \times (2 J +1)p$ matrices with complex entries.

We use the finite element method to compute the Neumann boundary data $(\phi_j, \lambda_j)_{j \in \mathbb{N}}$ in terms of the numbers
$\lambda_j$ and the Fourier modes $\langle \phi_j, e_\alpha \rangle$. Once these data are obtained a finite element approximation to 
$\tilde N^{M}(s)$ can be computed very quickly for arbitrary
$s \in \mathbb{C}$ in a given compact subset of the complex plane. 
The matrices $\tilde{N}^c$ can be computed very fast using a well known continued fraction expansion for the Bessel $K$-function \cite[Section 17]{Cuyt2008},
\begin{equation}\label{eq:Kexpansion}
 \left( \frac{1}{2 } + 2 \pi |m| a \frac{K'_{\I t}}{K_{\I t}}(2 \pi |m| a) \right) =  - 2 \pi |m| a - \mathop{\text{\Large$\mathbf{K}$}}_{n = 1}^\infty \left( \frac{-t^2- \frac{(2n -1)^2}{4}}{4 \pi |m| a + 2n} \right),
\end{equation}
where we use Gauss' notation
\[
\mathop{\text{\Large$\mathbf{K}$}}_{n = 1}^\infty \frac{p_n}{q_n}=\cfrac{p_1}{q_1+\cfrac{p_2}{q_2+\cfrac{p_3}{q_3+\cdots}}}.
\]

In order to compute the scattering matrix numerically in the above approximation we proceed as follows. By Theorem \ref{main1} the operator
$T(s) := (\id- \av) \mathcal{N}^M(s) +  \mathcal{N}^c(s)$ has a $p$-dimensional kernel spanned by $\frac{\partial}{\partial \mathbf{n}} E_k(z,s)|_{\partial M}$.
We compute the cut off approximation 
\[
 \tilde T(s) :=(\id- \tilde{\av}) \tilde{N}^M(s) +  \tilde{N}^c(s).
\]
If $J$ is large enough this matrix will have precisely $p$ small singular values. We can therefore perform a singular value decomposition to construct
an orthonormal system of singular vectors $(v_1,\ldots,v_p)$ with small singular values. The system of vectors $(\tilde{\av} (v_1),\ldots,\tilde{\av} (v_p))$
determines a $p \times p$ matrix $\tilde Q_1$. Similarly the vectors
$(\tilde{\av}\tilde{N}^M (v_1),\ldots,\tilde{\av} \tilde{N}^M (v_p))$ determine a $p \times p$ matrix $\tilde Q_2$. 
Since the set of invertible maps is open the matrix $(s-1) \tilde Q_2 + \tilde Q_1$ is invertible if the approximation is good enough. By Theorem \ref{main2} we then get a numerical approximation
of the scattering matrix by
\[
  \tilde C(s) = A^{s-1} (s \tilde Q_2 - \tilde Q_1) \left( (s-1) \tilde Q_2 + \tilde Q_1 \right)^{-1} A^{s}.
\]
 As before $A$ is the diagonal matrix $A=\mathrm{diag}(a_1,a_2,\ldots,a_p)$.

Since resonances are poles of $C(s)$ and we have the functional equation $C(s)C(1-s)=\id$, the resonances are precisely the zeros of the determinant of $C(1-s)$.

\subsection{Error estimates for the scattering matrix} \label{errorsection}

In this section we will show that in principle the error in the computation can be made rigorous if the exterior and interior Neumann-to-Dirichlet maps are obtained by a method with rigorous errors. Let us start assuming that we have a mechanism at our disposal to estimate the first Sobolev norm of
$\mathcal{N}^M(s) \Phi - \Psi$ for given smooth functions $\Phi$ and $\Psi$.
This depends on a chosen method of computation of the Neumann-to-Dirichlet map.

On $L^2(\partial M)$ we have the orthonormal basis $(e_{m,k})_{m \in \mathbb{Z}, \;k =1,\ldots, p}$. We define the Fourier multiplier 
$q: H^s(\partial M) \to H^{s-1}(\partial M)$ by $q e_{m,k} = \left( |m|+1 \right) e_{m,k}$. The operator $q$ is a first order pseudodifferential operator and can also be expressed in terms of the Laplace operator on the boundary. For concreteness we fix the $H^s$-norm on $\partial M$ as $\| \Phi \|_{H^s(\partial M)} = \| q^s \Phi\|_{L^2(\partial M)}$.

In the following we will assume that $s \in \mathbb{C}$ is fixed such that the assumptions of Theorem \ref{main2} hold:  $s \not=\frac{1}{2}$ is not a pole of $\mathcal{N}^{M}(s)$ or $\mathcal{N}^{c}(s)$, not a pole of the scattering matrix $C(s)$ and $s(1-s)$ is not an eigenvalue of $P$. 
Then
\[
 T'(s) := q T(s)=q\left( (\id- \av) \mathcal{N}^M(s) + \mathcal{N}^c(s) \right)
\]
is a zero order elliptic pseudodifferential operator. In particular $0$ is not in the essential spectrum of $T'^* T'$. This implies that the self-adjoint operator $|T'|$
has $0$ as a multiplicity $p$ eigenvalue and a spectral  gap in the sense that the spectrum is contained in $\{0\} \cup [K_1,\infty)$ for some $K_1>0$. 

Our numerical approximation takes place in the finite dimensional subspace $\mathcal{W}_J$ of functions $f$ that have a finite Fourier expansion of the form
\[
 f(z) = \sum_{| m | \leq J} f_m(y) e_m(x).
\]
As before $J$ is a sufficiently large integer.
The method will then usually find an orthonormal set  vectors $(v_1,\ldots,v_p)$ in $\mathcal{W}_J$ such that
\[
  \| T' v_k \| < \delta_1 \ll 1.
\]
If $P_0$ is the orthogonal projection onto the $p$-dimensional kernel of $|T'|$ it follows that
\[
 \| (\id - P_0) v_k \| \leq K_1^{-1} \delta_1.
\]
Applying the numerical approximation of $\mathcal{N}^M(s)$ we obtain another set of vectors $(w_1,\ldots,w_p)$ in the subspace. Given an error estimate on the Dirichlet-to-Neumann map as assumed we will get a bound of the form
\[
 \| \mathcal{N}^M(s) v_k -w_k \|_{L^2} < \delta_2.
\]
The  approximations $\tilde Q_1$ and $\tilde Q_2$ of the maps $Q_1$ and $Q_2$ can be though of as finite rank operators with range in the subspace 
$\mathcal{W}_J$ that vanish on the orthogonal complement of $\mathcal{W}_J$.
Recall that  $\tilde Q_1 =(\tilde\av(v_1),\ldots,\tilde\av(v_p))$ and $\tilde Q_2 =(\tilde\av(w_1),\ldots,\tilde\av(w_p))$. If we choose $(P_0 v_1, \ldots, P_0 v_p)$ as a basis
in the kernel of $T$ to describe $Q_1$ and $Q_2$ we obtain
\[
  \| Q_1 - \tilde Q_1  \| \leq \sqrt{p} \;  K_1^{-1}\delta_1  , \quad \| Q_2 - \tilde Q_2 \| \leq \sqrt{p} \left( \delta_2  + \|\mathcal{N}^M(s)\| K_1^{-1} \delta_1 \right),
\]
where the norms are the operator norms of the respective matrices. Let 
\[
\begin{split}
\epsilon_1 &=  \sqrt{p}K_1^{-1} \delta_1 +  | s-1| \sqrt{p} \left( \delta_2  + \|\mathcal{N}^M(s)\| K_1^{-1} \delta_1 \right),\\
 \epsilon_2 &=   \sqrt{p}K_1^{-1}\delta_1 +  | s | \sqrt{p} \left( \delta_2  + \|\mathcal{N}^M(s)\| K_1^{-1} \delta_1 \right) 
\end{split}
\]
and 
\[
 K_2 =\left\| \left( (s-1) \tilde Q_2 + \tilde Q_1\right)^{-1}\right \|, \quad K_3=\left \|  s \tilde Q_2 - \tilde Q_1 \right\|.
\]
Then, assuming $\epsilon_1 \, K_2 < 1$, we obtain
\[
 \left\| \left( (s-1) Q_2 + Q_1\right)^{-1} -  \left( (s-1) \tilde Q_2 + \tilde Q_1\right)^{-1} \right\| \leq \frac{\epsilon_1 K_2^2}{ 1- \epsilon_1 \,K_2 }.
\]
Collecting everything we can now estimate the error of the approximated scattering matrix
\[
  \tilde C(s) = A^{s-1} (s \tilde Q_2 - \tilde Q_1) \left( (s-1) \tilde Q_2 + \tilde Q_1 \right)^{-1} A^{s}.
\]
as
\[
\| \tilde C(s) - C(s) \| \leq \| A^{s-1} \| \| A^{s} \|  \left(  \frac{\epsilon_1 \, K_2^2 (K_3 + \epsilon_2) }{ 1- \epsilon_1 \,K_2}  +  \epsilon_2 \, K_2 \right).
\]

If the error for the Neumann-to-Dirichlet map is known, and the norm $\|\mathcal{N}^M(s)\|$ and the spectral gap $K_1$ can be estimated or computed, the error for the scattering matrix can be explicitly bounded. In principle this makes it possible to use interval arithmetics to rigorously prove interval bounds for the scattering matrix.
The scattering matrix is holomorphic in the resolvent set and by a classical theorem of Hurwitz the zeros of uniform approximations of $C(s)$ converge to zeros of $C(s)$. A quantitative version of this is given in 
\cite{R69}. This allows to estimate the error of the approximation of the computed resonances. 
Note that using a finite truncation of \eqref{eq:NDdiff} approximates the Neumann-to-Dirichlet operator in the correct norm. 

\subsection{Embedded eigenvalues}

Since we assumed $P$ was self-adjoint any eigenvalues will have to be on the real line. There are two classes of eigenvalues: those below the continuous spectrum
and those embedded into the continuous spectrum. We will refer to the eigenvalues $\lambda < 1/4$ as small eigenvalues and the eigenvalues $\lambda \ge 1/4$
as the embedded eigenvalues. Embedded eigenvalues correspond to real values of $t$ and therefore the real part of $s$ for these eigenvalues will always be $1/2$.
As a consequence the zero modes of the Fourier expansion of these eigenfunctions in the cusp has to vanish. We therefore make the following observation.

\begin{theorem}\label{main3}
 The embedded eigenvalues away from the poles of $\mathcal{N}^M$ and $\mathcal{N}^c$ are exactly those values of $\lambda=s(1-s)\in\left[\frac{1}{4},+\infty\right)$ for which there exists a non-zero vector $f \in C^\infty(S^1)$
 such that $\av(f)=0$ and 
\[
  \left( \mathcal{N}^M(s) + \mathcal{N}^c(s) \right) f =0.
\]
\end{theorem}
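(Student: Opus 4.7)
For necessity, suppose $\lambda = s(1-s) \in \left[\frac{1}{4},\infty\right)$ is an embedded eigenvalue with eigenfunction $\Psi \in L^2(X)$, so $\Re s = 1/2$. My first observation is that the zero Fourier mode $\psi_{0,k}(y)$ of $\Psi|_{Z_k}$ satisfies $(-y^2 \partial_y^2 - s(1-s))\psi_{0,k} = 0$ and is therefore a linear combination of $y^s$ and $y^{1-s}$; since $|y^{1/2 \pm \I t}|^2 y^{-2} = y^{-1}$ is not integrable at $+\infty$ and the two solutions are linearly independent, $\psi_{0,k}$ must vanish. I would then set $f := \frac{\partial \Psi}{\partial \mathbf{n}}\big|_{\partial M}$ (outward normal from $M$) and obtain $\av(f) = 0$. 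Because $s$ avoids Neumann poles, $\mathcal{N}^M(s) f = \Psi|_{\partial M}$; on the cusp side the outward normal is $-a_k\partial_y$, so the cusp Neumann data of $\Psi$ is $-f|_{\partial M_k}$, giving $\mathcal{N}^{Z_k}(s)(-f|_{\partial M_k}) = \Psi|_{\partial M_k}$. Adding these produces $(\mathcal{N}^M + \mathcal{N}^c)(f) = 0$, and $f$ is nonzero because otherwise $\Psi|_M$ would be a Neumann eigenfunction at an excluded value.

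For sufficiency, given nonzero $f \in C^\infty(\partial M)$ with $\av(f) = 0$ and $(\mathcal{N}^M(s) + \mathcal{N}^c(s)) f = 0$, I would first solve the interior Neumann problem on $M$ to obtain $\Psi_M$ and, on each cusp, solve the cuspidal problem with cusp Neumann data $-f|_{\partial M_k}$ to get $\Psi_{Z_k} \in L^2(Z_k)$---the $L^2$ property following from exponential decay of $\sqrt{y}\,K_{\I t}(2\pi|m|y)$ and from $\av(f)=0$ killing the zero mode. The normal derivatives match across $\partial M_k$ by construction, while the hypothesis $\mathcal{N}^M f = -\mathcal{N}^c f$ forces the Dirichlet values to match as well. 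Gluing then yields a $C^1$ distributional solution of $(P - \lambda)\Psi = 0$ on $X$, which is therefore $C^\infty(X)$ by elliptic regularity and lies in $L^2(X)$; nontriviality is inherited from $f \neq 0$, since the absence of a Neumann pole at $s$ rules out $\Psi_M \equiv 0$.

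The main obstacle, I expect, will be the careful bookkeeping of signs: the outward unit normal from $M$ at $\partial M_k$ is $+a_k\partial_y$ while that from $Z_k$ is $-a_k\partial_y$, so the cusp Neumann data of any common eigenfunction carries an extra minus sign, and it is precisely this sign flip that converts the interior/cusp matching condition into $(\mathcal{N}^M + \mathcal{N}^c) f = 0$. A secondary technical point is ensuring the $L^2$ property at $\Re s = 1/2$ genuinely forces vanishing of the zero Fourier mode in the necessity direction; this is where the threshold $\lambda \geq \frac{1}{4}$ really enters, since for $\lambda < \frac{1}{4}$ one of $y^s$, $y^{1-s}$ is square-integrable against $y^{-2}\,\dr y$ and the argument would break.
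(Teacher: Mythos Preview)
Your proof is correct and follows precisely the approach the paper intends: the paper states this theorem as an ``observation'' without a written proof, noting only that for embedded eigenvalues $\Re s=\tfrac{1}{2}$ forces the zero Fourier mode in each cusp to vanish; your argument fills in exactly these details and mirrors the gluing argument used in the paper's proof of Theorem~\ref{main1}. One tiny caveat: at the threshold $s=\tfrac{1}{2}$ the solutions $y^{s}$ and $y^{1-s}$ coincide and the second solution becomes $y^{1/2}\log y$, so your linear-independence remark needs a one-line adjustment there (both are still non-$L^2$, so the conclusion is unchanged).
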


Computations of embedded eigenvalues face the problem that it is not possible to numerically distinguish between an embedded eigenvalue and a resonance that is close to the spectrum.  Rigorous error estimates that guarantee the existence of an embedded eigenvalue therefore always need some additional information about the geometry or, in the constant curvature case, arithmetic nature of the surface (see for example \cite{MR2249995}). The mathematically rigorous numerical part of this work is mostly about the computation of the scattering matrix and of resonances. We therefore only briefly sketch how one detects embedded eigenvalues or resonances close to the spectrum.
We are looking for vectors $v$ that satisfy
$\tilde{\av}(v) = 0$ and for which  $\left( \tilde N^M + \tilde N^{c} \right)v$ is small.
For numerical stability the $\mathrm{QR}$-decomposition $\tilde B(s)= \tilde{\mathbf Q}(s) \tilde{\mathbf R}(s)$ of the matrix
\[
\tilde B := \left( \tilde N^M + \tilde N^{c} \right) \oplus \tilde{\av} \oplus \tilde N^M \oplus \tilde N^c
\]
is performed. This matrix maps $\mathbb{C}^{(2 J +1) p}$ to  $\mathbb{C}^{(2 J +1) p} \oplus \mathbb{C}^{(2 J +1) p} \oplus \mathbb{C}^{(2 J +1) p} 
\oplus \mathbb{C}^{(2 J +1) p}$.
Let ${\mathbf P}$ be the projection onto the first two summands. We are looking to find values of $s$ for which there exists a vector $v$ for which 
${\mathbf P} \tilde B(s) v$ is very small whereas $(1-p) \tilde B(s) v$ is not. Since $\tilde{\mathbf R}$ is invertible these are exactly the small singular values of the matrix
${\mathbf P} \tilde{\mathbf Q}(s)$. Thus, our method of finding embedded eigenvalues is to plot the smallest singular value of ${\mathbf P} \tilde{\mathbf Q}(s)$ as a function of $s=\frac{1}{2} + \I \sqrt{\lambda -\frac{1}{4}}$. 
If the smallest singular value is close to zero for some $s=\frac{1}{2} + \I \sqrt{\lambda -\frac{1}{4}}$ this amounts to a small spectral gap $K_1$, i.e. a small $(p+1)$-st singular value of $T'$. Hence the error estimate for the scattering matrix near such a point becomes much worse, reflecting the fact that we may also have a resonance close to the spectrum.
The existence of resonances near the spectrum or embedded eigenvalues and completeness of a list of computed values can heuristically be verified using Turing's method and variants of the Weyl law that have been proved in this context \cite{MR725778}. Versions of the Weyl law with error estimates are available in the constant curvature case, see e.g. \cite{str19}. 

\section{Examples and numerical studies} \label{examples}

\subsection{General set-up}
In this section we will consider several examples of manifolds with cusps. We will be focusing on the Laplace operator acting on functions, i.e. 
in all the examples we will have $P=\Delta$. These examples are divided into groups as follows.
\begin{itemize}
 \item[$A_\phi$] the modular domain with its constant curvature metric changed by a conformal factor $\mathrm{e}^{\phi}$. This family is parametrised by smooth functions 
 $\phi$ on the modular surface.
 \item[$B_r$] a triangular domain that is sometimes referred to as Artin's billiard and that interpolates between the Hecke triangular surfaces. This family is parametrised by a real number  $r>\frac{1}{2}$.
 \item[$C_{\ell,\tau}$] the surfaces of genus $(1,1)$ and constant curvature, i.e.\ the punctured torus. The Teichm\"uller space of genus $(1,1)$ has dimension $2$ and therefore this family is parametrised by a length parameter $\ell>0$ and a twist parameter $\tau \in [0,1)$.
 \item[$D$] the unique hyperbolic surface of genus zero with three cusps.
\end{itemize}

In all these examples we decomposed the surface into compact part $M$ and a cusp-part.  The method allows the freedom of choosing a cut-off parameter $a$. In the examples below $a$ was usually chosen in the interval $[0.3, 2]$, depending on the geometry. Note that choosing significantly higher values of $a$ decreases the accuracy of a Neumann-to-Dirichlet map approximation, and choosing a small $a$ creates meshing problems due to a ``narrow'' compact part $M$.   Experiments indicate that the dependence of computed eigenvalues and resonances upon a choice of $a$ in a suitable subset of the above interval is negligible.

To compute a numerical approximation of 
the Neumann-to-Dirichlet map we use the accelerated expansion \eqref{eq:ND2}. We used the finite element framework
FreeFEM++ (\cite{hecht2012freefem} and \cite{hecht2012new}) to compute the Neumann-to-Dirichlet map at some point $s_0$ 
and to compute 
the boundary data of the first 1000 Neumann eigenvalues. On the boundary Fourier modes up to $|m|=40$ were used. In the FEM implementation we used discretisation with up    to 200 points on the boundary of the compact part. The  Neumann-to-Dirichlet map  on the cusps is computed using \eqref{eq:Ncusp} and \eqref{eq:Kexpansion}.

The corresponding data were expressed in terms of Fourier modes on the boundary and imported into a Mathematica 
script that directly computed the scattering matrix by the method described before. Since then the scattering matrix was available as a numerical function, 
we used Newton's root finding algorithm to locate zeros of its determinant. The functional equation \eqref{functionaleq} was then used to determine the scattering resonances. 
The poles and the zeros of the scattering matrix are located in the half-planes 
$\Re{s} > \frac{1}{2}$ and $\Re{s} < \frac{1}{2}$, respectively. Unless resonances are very close to the spectrum there are therefore no issues due to poles and zeros being close together. One can thus use the argument principle and contour integration to count the number of resonances in a region bounded away from the spectrum. We have found that in practice Newton's root finding algorithm finds all resonances away from the spectrum in a fast reliable manner. This is due to the well-behaved analytic properties of the scattering matrix. To locate and track resonances that are very close to the spectrum we start from a perturbation of the surface and then use predictive algorithms based on polynomial extrapolation to follow the path of the resonance. This way even the resonances that seem to have high order touching points with the spectrum could be tracked.

The following conventions are assumed in all videos and graphs:
\begin{itemize}
\item Resonances and eigenvalues are traced in a part of  $(\Re s, \Im s)\in\left(-\infty,\frac{1}{2}\right]\times[0,+\infty)$ quadrant of the $s$-plane.
\item The resonances very close to the continuous spectrum $\frac{1}{2}+\I\left[0,+\infty\right)$, as well as embedded eigenvalues, are shown in blue. Not all embedded eigenvalues are shown. 
\item The resonances on the critical line $\frac{1}{4}+\I[0,+\infty)$ or very close to it are shown in red.
\item  The resonances on the line $\Re s=0$ or very close to it are shown in green.
\item The eigenvalue at $s=0$ is never shown.
\item In graphs showing the trajectories of resonances, the starting points of the trajectories are marked by a disk, and the end points by a square.
\item In geometry figures, the arcs shown in the same colour and line type are identified; solid black lines indicate Neumann conditions imposed on the arcs.
\end{itemize}

\subsection{Benchmarking}
In the case of the modular surface $A_0$ the scattering matrix can be expressed in terms of the Riemann zeta function, see \eqref{eq:Cmodular}, and we could compare and compute the relative error of our approximation.
The scattering matrix $C(s)$ computed for $s = \frac{1}{2} + \I t$, $t \in [0.,30.]$ (this amounts to the interval $[0.25,900.]$ in the spectrum) had a maximal relative error of about $0.25\%$.
On the interval $[0,10]$ for $t$ we even obtained a maximal relative error not exceeding $0.004\%$. Similar errors hold on the critical line. We note that these approximations are surprisingly good considering that a finite element approximation was used. The finite element method and subsequent computations were carried out with double precision. 

As shown in Subsection \ref{errorsection} the $(p+1)$-th singular value of $ q \tilde T$ is a measure of the spectral gap. Away from resonances close to the real line or embedded eigenvalues the size of the first $p$ smallest singular values of $q \tilde T$ compared to the $(p+1)$-th small singular value was extremely small (typically of an order of a double precision rounding error) in our computations. Hence, using the terminology of Subsection \ref{errorsection} the numerical estimate for $K_1^{-1} \delta_1$ was very small and the theoretical error was dominated by $\delta_2$ which stems from the approximation of the Neumann-to-Dirichlet map. In our case most of the errors are due to the FEM approximation and decrease with mesh refinement.

The computational cost of building the scattering matrices using FEM realisation of Neumann-to-Dirichlet maps is relatively low if $\Im s\le 30$ (this of course depends on the FEM implementation and the number of eigenvalues used). The real runtime costs actually occur when we look for complex roots and poles of the scattering matrix and trace individual resonance dependence on the parameters.  

\subsection{$A_\phi$. The modular domain and conformal perturbations }

\subsubsection{Description of the surface}

This surface can be obtained from the domain
\[
\left\{ (x,y) \in \mathbb{H} \mid x^2+y^2 \geq 1, \; -\frac{1}{2} \leq x \leq \frac{1}{2} \right\}
\]
by gluing along the boundary as follows. The sides $x=-\frac{1}{2}$ and $x=\frac{1}{2}$
are identified by means of the parallel translation $x \mapsto x+1$. The circular arc
$\{ (x,y) \in \mathbb{H} \mid x^2+y^2 =1, 0 \leq x \leq \frac{1}{2} \}$ is identified  with
$\{ (x,y) \in \mathbb{H} \mid x^2+y^2 =1, -\frac{1}{2} \leq x \leq 0 \}$ using the map $x \mapsto -x$.
This results in a hyperbolic surface with one cusp and two orbifold singularities at the points $(0,1)$ and $(1/2,\sqrt{3}/2)$, the latter identified with $(-1/2,\sqrt{3}/2)$.

\begin{figure}[htb!]
\begin{center} 
\includegraphics{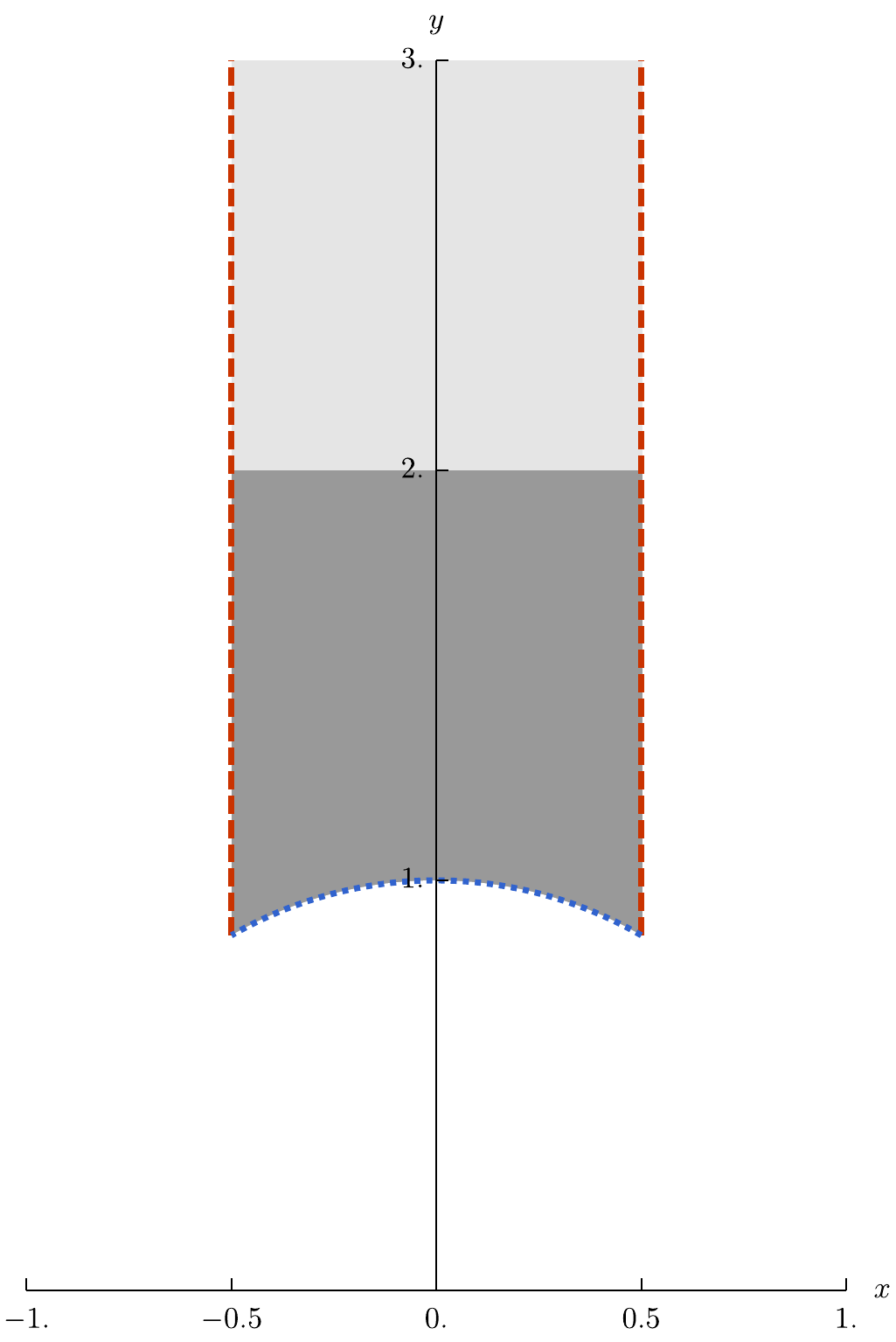}
\caption[Fundamental domain for the modular surface]{Fundamental domain for the modular surface decomposed into cusp with $a=2$ (lighter shading) and a compact part (darker shading). The arcs of the same colour/line type are identified}
 \label{fig4}
 \end{center}
\end{figure}

This surface can be decomposed into a compact part 
and a cusp as indicated in Figure \ref{fig4}.
It can also be obtained directly as a quotient $X = \mathrm{PSL}(2,\mathbb{Z}) \backslash \mathbb{H}$ as the 
above described domain is a fundamental domain of the $\mathrm{PSL}(2,\mathbb{Z})$ action, and the boundary components are
identified using the maps $z \mapsto z+1$ and $z \mapsto -\frac{1}{z}$ (see, for example,  \cite{MR1942691} for an introduction).

While the metric $y^{-2}(dx^2 + dy^2)$ has constant curvature $-1$ we can consider a function $\phi$ which is compactly supported in the interior of the shaded region
and change the metric by a conformal factor $\mathrm{e}^\phi$ to $\mathrm{e}^{\phi(x,y)} y^{-2}(dx^2 + dy^2)$. If
\[
 \int_M (1-\mathrm{e}^{\phi}) y^{-2} \dr x \dr y =0,
\]
this conformal transformation leaves the volume of $X$ unchanged. The surface equipped with this modified metric will in general have  non-constant curvature.
 
 \subsubsection{Known properties of the spectrum}

\underline{\bf Case $\phi=0$:}
In the case of constant curvature $-1$ ($\phi=0$) this surface is arithmetic. It has
infinitely many embedded eigenvalues (the so-called Maass-eigenvalues) satisfying a Weyl law as shown by Selberg \cite{selberg1989collected}
using his trace formula.
The scattering matrix $C(s)$ can be computed explicitly and equals
\begin{equation}\label{eq:Cmodular}
 C(s) = \frac{\Lambda(2s -1)}{ \Lambda(2s) },
\end{equation}
where $\Lambda(s) = \pi^{-\frac{s}{2}} \Gamma(\frac{s}{2}) \zeta(s)$ and $\zeta(s)$ is the Riemann zeta function \cite{MR0371818}, \cite{MR803366} .
Moreover, the Maass eigenvalues have been computed with great accuracy and verified by a rigorous algorithm \cite{MR2249995}, see also \cite{BSVdata}.
This surface therefore provides an excellent test for our method. 

\noindent 
\underline{\bf General case:}
In case $\phi$ is non-constant (i.e. if curvature is non-constant) one expects  at least some of the embedded eigenvalues to dissolve and become resonances
(\cite{MR1226958}).
This has become known as the Sarnak-Phillips conjecture.
Similarly the resonances will move away from the critical line.

\subsubsection{Numerical results}

\underline{\bf Case $\phi=0$:}  Since the surface is symmetric with respect to the transformation
$x \mapsto -x$ one can use symmetry reduction and consider the space of even and odd functions. These are functions
on
\[
\left \{ (x,y) \in \mathbb{H} \mid x^2+y^2 \geq 1, \; 0\leq x \leq \frac{1}{2} \right\}
\]
satisfying either Dirichlet (odd functions) or Neumann (even functions) boundary conditions at the boundary. 
The spectrum on the space of odd functions is purely discrete, and there are no resonances. Several first eigenvalues on the space of odd functions, and their comparison with the results of \cite{BSVdata} are presented in Table \ref{table:modularodd}. 

The results for the space of odd functions are below in Section \ref{subsection:Ardennumerics}.

\noindent 
\underline{\bf The curve in moduli space:} We chose the the family of the conformal factors
\[
\begin{split}
 \mathrm{e}^{\phi_q(x,y)} &= 1 + q\,c(x,y) ,\\
 c(x,y)&=\sin(5x - 0.5)\, \mathrm{e}^{-40 ((x-0.1)^2 +(y-1.5)^2)},
\end{split}
\]
with parameter $q$ in the interval $q \in [-2.,2.]$. One can sum over the group $\mathrm{PSL}(2,\mathbb{Z})$ to make this conformal factor
a function on the surface. For numerical purposes the additional terms introduced in that way are however irrelevant as they
are below working (double) precision. Note that the resulting family of metrics has constant curvature precisely at $q=0$. Moreover,
the volume is constant along this curve in the moduli space of metrics. We computed the Neumann-to-Dirichlet data at $200$ points in the parameter interval
on the cutoff surface with boundary at $a=2.2$, with $100$ discrete points on the boundary, as well as $600$ eigenvalues and their boundary data, 
and Fourier modes with $m$ between $-15$ and $15$. One can then trace the resonances as they move
along the curve, see Video \ref{video:1}.

\begin{figure}[htb!]
\begin{center} 
\includegraphics{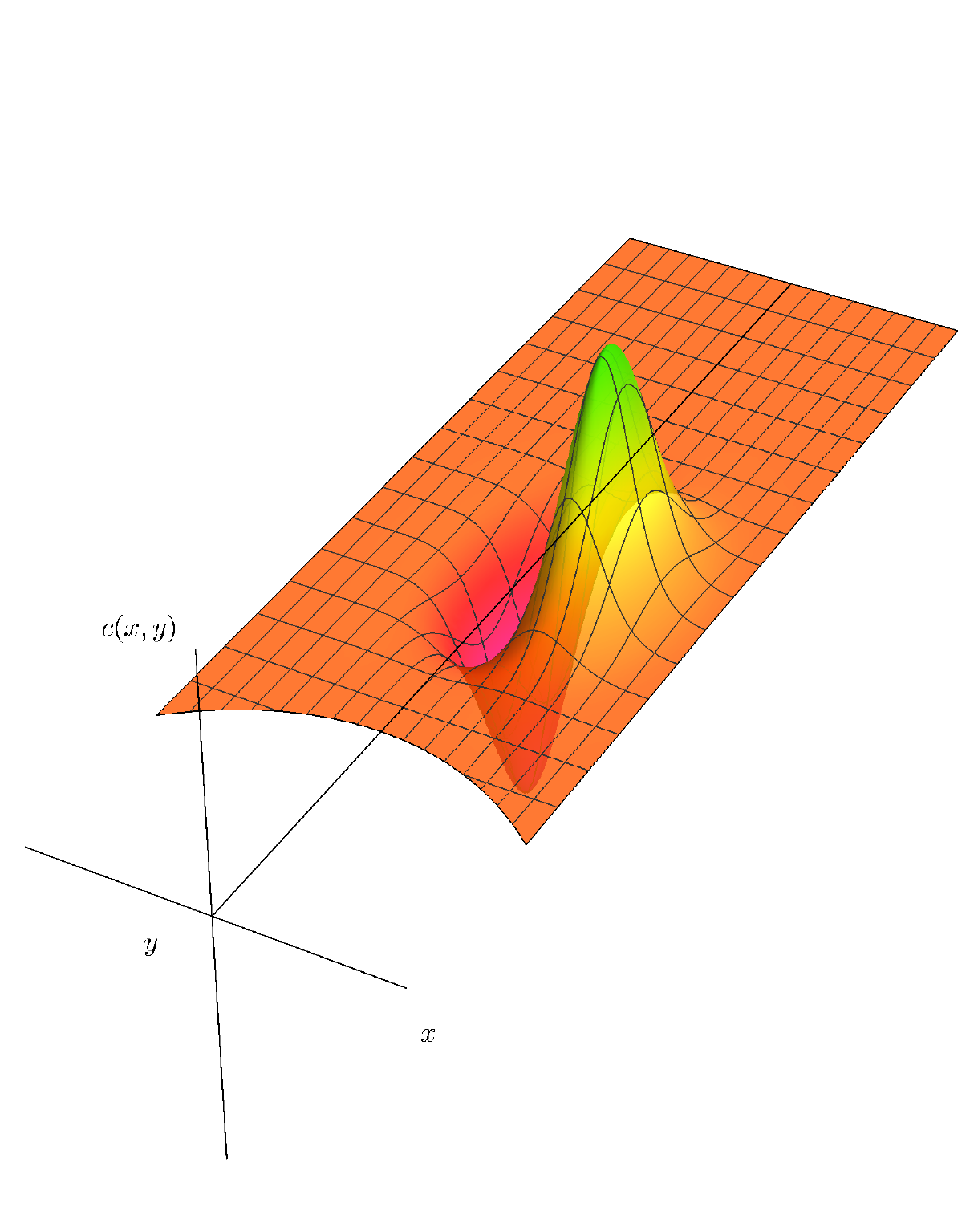}
\caption{$c(x,y)$ as a function on the modular domain}
\label{fig5}
\end{center}
\end{figure}

\begin{figure}[htb!]
\begin{center} 
\includegraphics{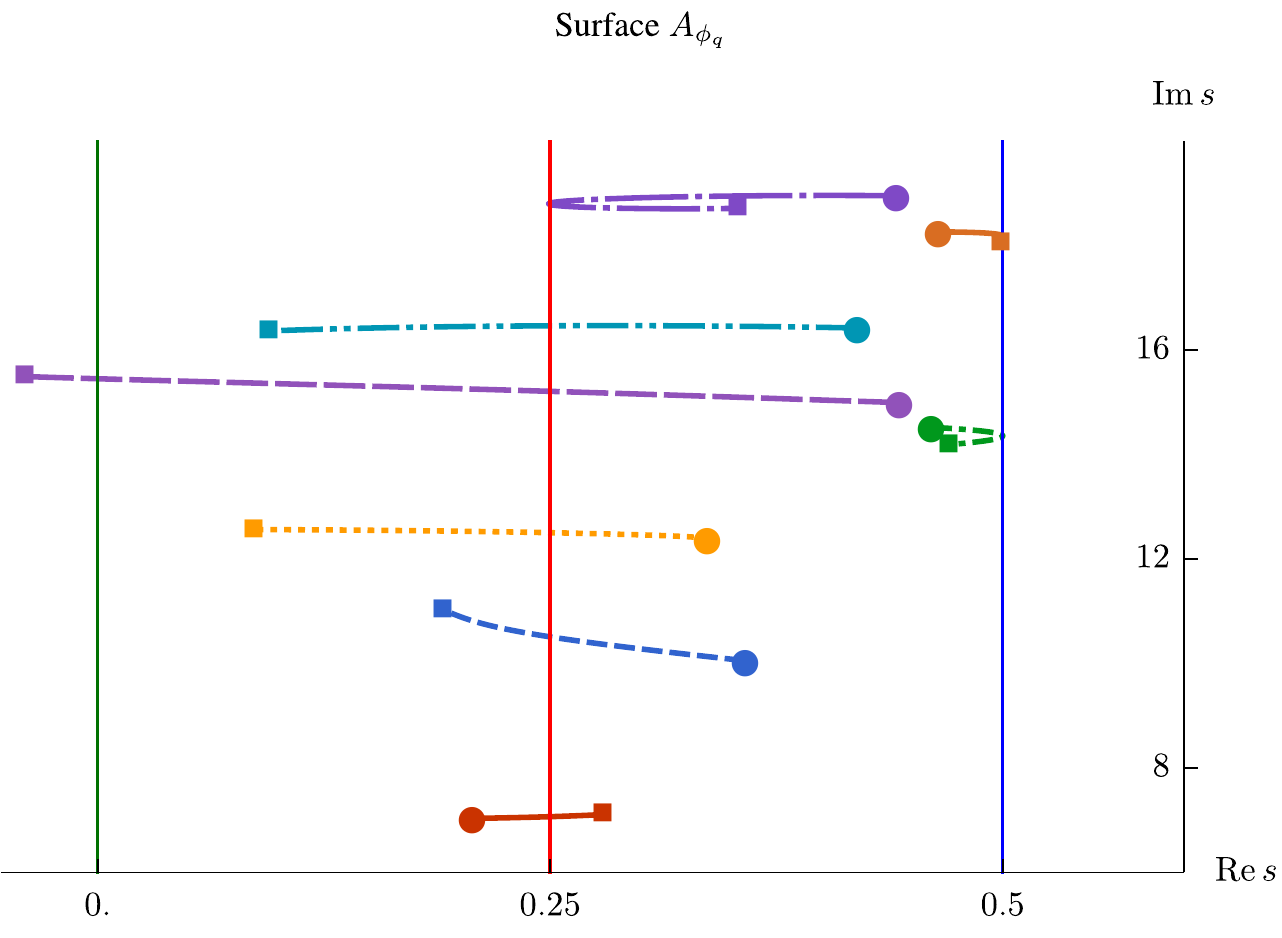}
\caption{Trajectories of eight selected resonances of $A_{\phi_q}$ as $q$ changes}
\label{fig6}
\end{center}
\end{figure}

\begin{video}[htb!] 
\centering
\ovalbox{{\href{http://michaellevitin.net/hyperbolic.html#video1}{\texttt{michaellevitin.net/hyperbolic.html\#video1}}}}

\ovalbox{{\href{https://youtu.be/pn3GvzL9ZCI}{\texttt{youtu.be/pn3GvzL9ZCI}}}}

\caption{The  dynamics of the resonances for $A_{\phi_q}$ as $q$ changes\label{video:1}}
\end{video}

The same computation was performed using the family of conformal factors
\[
\begin{split}
 \mathrm{e}^{\tilde \phi_q(x,y)}& = 1 + q\,  \tilde c(x,y) ,\\
 \tilde c(x,y)&=\sin(5 (y-1.5)) \,\mathrm{e}^{-40 ((x-0.1)^2 +(y-1.5)^2)}.
\end{split}
\]
We omit the results which are very similar.

\subsection{$B_r$. Artin's billiard}

\subsubsection{Description of the surface}

Given a positive parameter $r>\frac{1}{2}$ this surface can be obtained from the domain
\[
 \{ (x,y) \in \mathbb{H} \mid x^2+y^2 \geq r^2, \; -\frac{1}{2} \leq x \leq \frac{1}{2} \}
\]
by gluing along the boundary as follows. The sides $x=-\frac{1}{2}$ and $x=\frac{1}{2}$
are identified by means of the parallel translation $x \mapsto x+1 $. The circular arc
$\{ (x,y) \in \mathbb{H} \mid x^2+y^2 =r^2, 0 \leq x \leq \frac{1}{2} \}$ is identified  with
$\{ (x,y) \in \mathbb{H} \mid x^2+y^2 =r^2, -\frac{1}{2} \leq x \leq 0 \}$ using the map $x \mapsto -x$.
This results in hyperbolic surface with one cusp and two conical singularities. Since the surface is symmetric with respect to the transformation
$x \mapsto -x$ one can use symmetry reduction and consider the space of even and odd functions. These are functions
on
\[
\left \{ (x,y) \in \mathbb{H} \mid x^2+y^2 \geq r^2, \; 0\leq x \leq \frac{1}{2} \right\},
\]
see Figure \ref{fig7}, 
satisfying Dirichlet or Neumann boundary conditions at the boundary. Since the spectrum on the space of odd functions is pure discrete we consider here only the spectrum on the subspace of even functions.

\begin{figure}[htb!]
\begin{center} 
\includegraphics{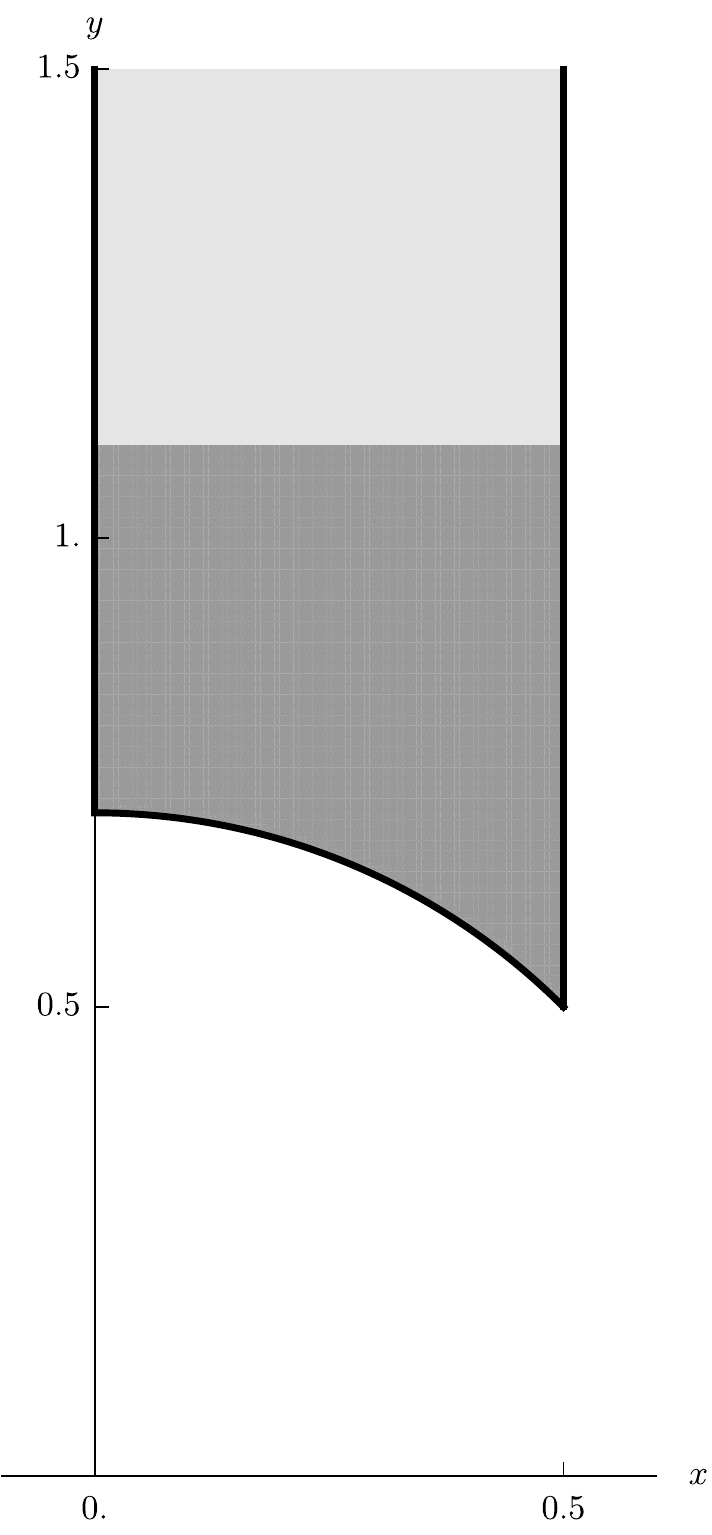}
\caption[The reduced modular domain for Artin's billiard $B_{1/\sqrt{2}}$]{The reduced modular domain for Artin's billiard $B_{1/\sqrt{2}}$. Neumann conditions are imposed on the boundary}
\label{fig7}
\end{center}
\end{figure}

\subsubsection{Known properties of the spectrum}

There are various cases when this surface can be obtained as a quotient of $\mathbb{H}$ by a Hecke triangle group $G_q$, $q\ge 3$, namely when $r^{-1}=2\cos\frac{\pi}{q}$.
In particular, for $r=\frac{1}{\sqrt{3}}$, $r = \frac{1}{\sqrt{2}}$,  and $r=1$, the resulting surfaces are arithmetic and correspond to the
surfaces obtained from the Hecke triangle groups $G_q$ in the cases $q=6,4,3$, respectively (see \cite{MR2900555}). These are  the only arithmetic cases.

They have infinitely many embedded eigenvalues satisfying Weyl's law. In each of the above three cases the scattering matrix, and hence the resonances,
can be expressed explicitly in terms of the zeros of the Riemann $\zeta$-function. This has been done explicitly in \cite{HowardThesis} but the formulae can also be deduced
using the known expressions for the congruence subgroups (\cite{hejhal1976selberg} and \cite{MR803366}). The results are
\begin{alignat}{2}
r&=\frac{1}{\sqrt{3}}:&\qquad C(s) &=  \frac{1+3^{1-s}}{1+3^{s}} \frac{\Lambda(2s -1)}{ \Lambda(2s) }\label{eq:Conj1sqrt3}\\
 r&=\frac{1}{\sqrt{2}}:& \qquad C(s) &=  \frac{1+2^{1-s}}{1+2^{s}} \frac{\Lambda(2s -1)}{ \Lambda(2s) }\label{eq:Conj1sqrt2}\\
 r&=1\ (\text{ the same as } A_0):&\qquad C(s) &=  \frac{\Lambda(2s -1)}{ \Lambda(2s) }.\label{eq:Conj1}
 \end{alignat}
  Because of the different choice of cusp-width our formulae differ by a factor $3^{1/2-s}$ and $2^{1/2-s}$ respectively from \cite{HowardThesis} in the first two cases.
 These surfaces were recently investigated in the context of the Sarnak-Phillips conjecture by Hillairet and Judge, who proved that for generic
 $r$ there are no eigenvalues (\cite{hillairet2014hyperbolic}) in the subspace of even functions.

\subsubsection{Numerical results}\label{subsection:Ardennumerics} We have computed resonances for $1000$ equidistant points in the parameter range $r \in [0.54, 1.20]$ and tracked them, see Video \ref{video:2}, and also Figure \ref{fig8} for selected resonances.

\begin{video}[htb!] 
\centering
\ovalbox{{\href{http://michaellevitin.net/hyperbolic.html#video2}{\texttt{michaellevitin.net/hyperbolic.html\#video2}}}}

\ovalbox{{\href{https://youtu.be/pn3GvzL9ZCI}{\texttt{youtu.be/pn3GvzL9ZCI}}}}

\caption{The  dynamics of the resonances for $B_r$ as $r$ changes\label{video:2}}
\end{video}
 
\begin{figure}[htb!]
\begin{center} 
\includegraphics{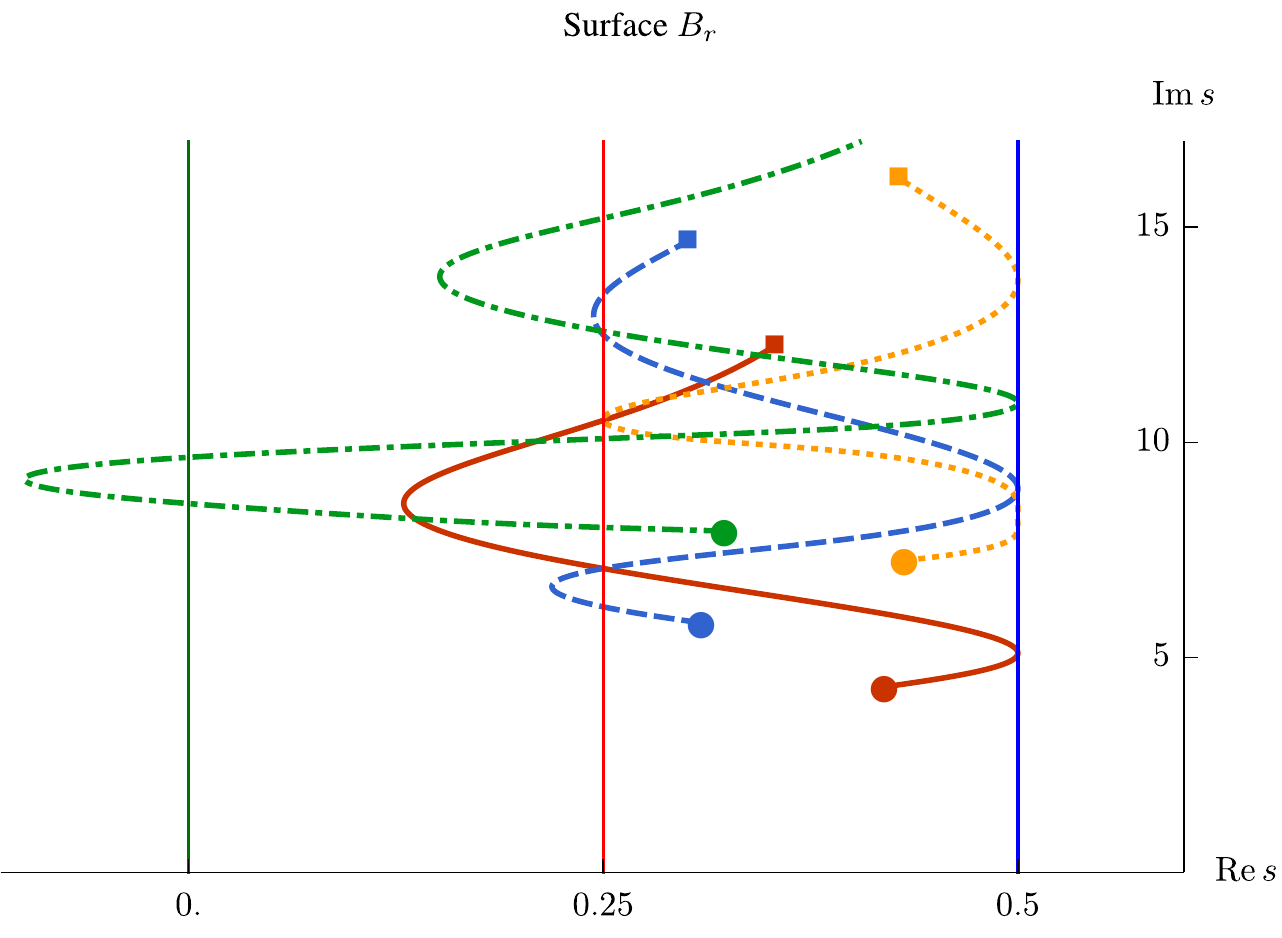}
\caption[Trajectories of four selected resonances for $B_{r}$]{Trajectories of four selected resonances for $B_{r}$, $r\in[0.54,1.20]$}
\label{fig8}
\end{center}
\end{figure}  

The resonances and embedded eigenvalues for the special arithmetic cases are shown in Figure  \ref{fig9}.

We have also investigated the case $r=0.5001$, which is close to the limiting case $r=1/2$. Since in this case another layer of continuous spectrum appears
one expects resonances to accumulate near the spectrum as $r \to \frac{1}{2}$. Apart from these resonances clustering around the spectrum we find stable
ones that seem to converge to half the Riemann zeros, see Figure \ref{fig9}. 

 \begin{figure}[htb!]
\begin{center} 
\includegraphics{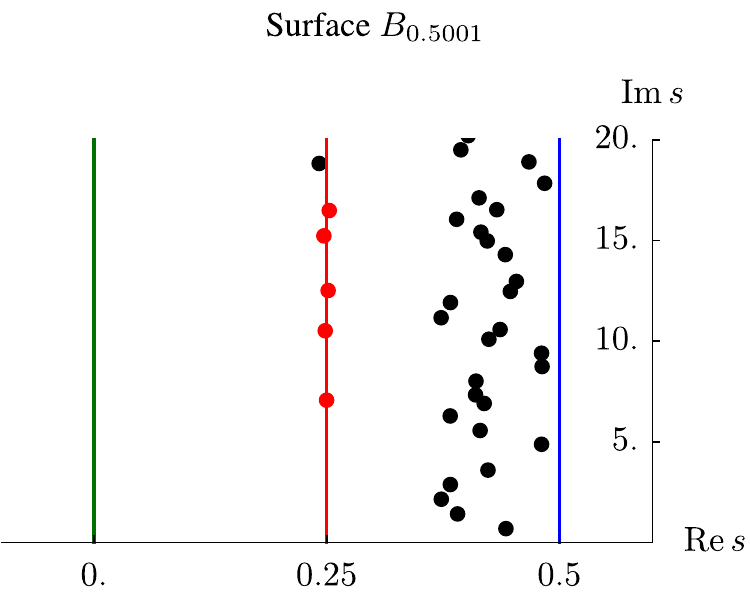}\hfill
\includegraphics{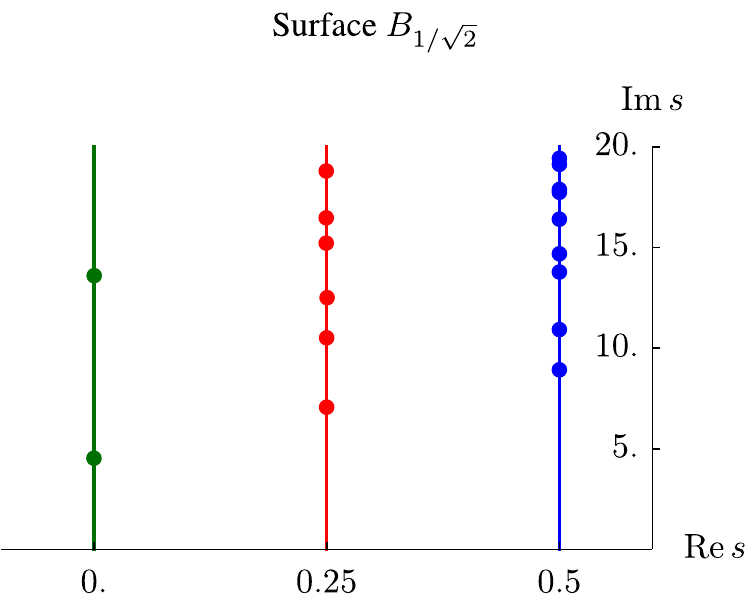}\\
\includegraphics{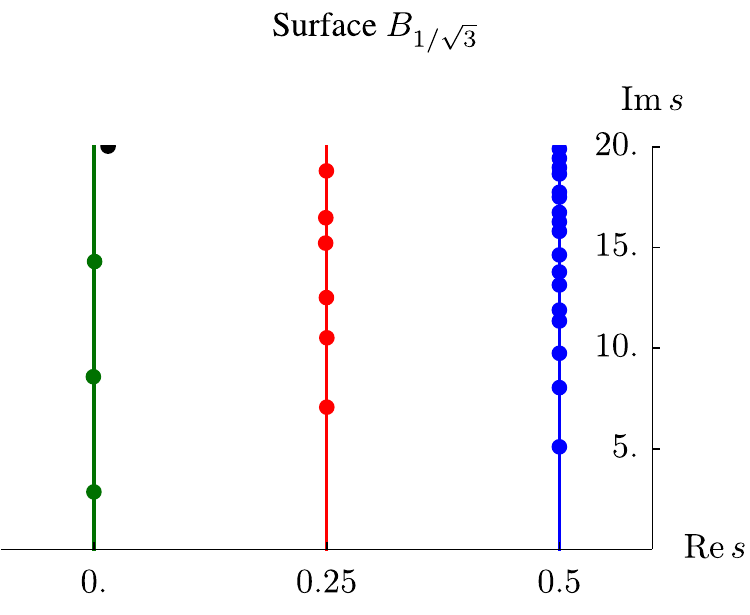}\hfill
\includegraphics{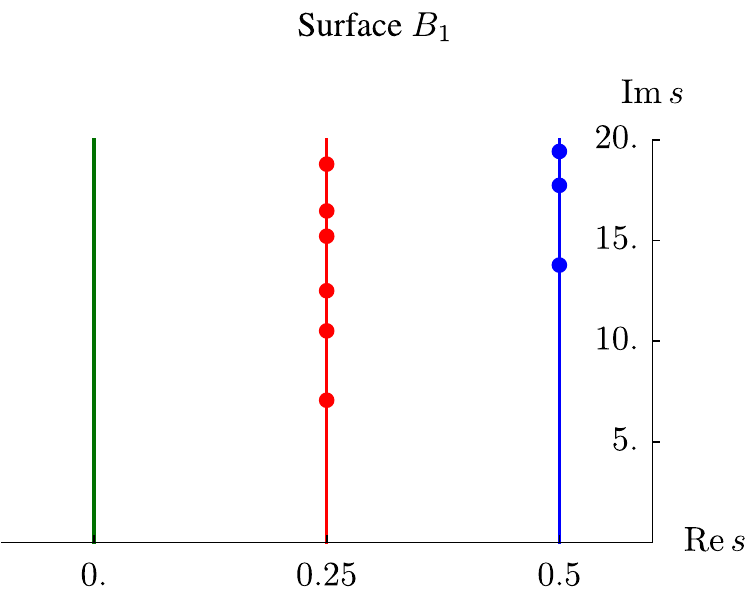}
 \caption{Resonances for $B_r$ with special values of $r$}
 \label{fig9}
 \end{center}
 \end{figure}  

The numerical values of resonances and embedded eigenvalues for four special cases, and comparison with theoretical predictions and known results are collected in Tables \ref{table:artin-even-resonances} and \ref{table:artin-ev}.

\subsection{$C_{\ell,\tau}$. Hyperbolic surfaces of genus one with one cusp}

\subsubsection{Description of the surface}

The Teichm\"uller space for genus one surfaces of constant negative curvature $-1$ and one cusp is two dimensional and can be
parameterised by the two Fenchel-Nielsen coordinates $\ell >0$ and $\tau \in [0,1)$. The parameter $\ell$ is the length of a primitive closed geodesic
and the angle $\tau$ is the twist parameter along this geodesic. Given the above two parameters we have an explicit description of the corresponding surface
of genus one with one cusp as follows. 

 For a fixed $\ell>0$, let $\alpha>0$ be the angle $\alpha=2 \arctan(\tanh{\frac{\ell}{4}})=\arcsin(\tanh{\frac{\ell}{2}})$. Then the fundamental domain
of the surface with Fenchel-Nielsen coordinates $(\ell, \tau)$ is the non-compact domain $D$ bounded by the following oriented geodesic
arcs, $\gamma_1, \gamma_2, \gamma_3, \gamma_4, \gamma_5, \gamma_6, \gamma_7$,
\[
 \begin{alignedat}{2}
  \gamma_1:  [-\alpha,\alpha]  &\to \mathbb{H}, &\quad\psi &\mapsto \frac{\sin \alpha}{4} \I \;e^{- \I\psi},\\
  \gamma_2:  \left[-\frac{\pi}{2}+\alpha,\frac{\pi}{2}-\alpha\right]  &\to \mathbb{H}, &\quad\psi &\mapsto \frac{1}{4}+\frac{\cos \alpha}{4} \I \; e^{-\I\psi},\\
  \gamma_3:\left[-\frac{\pi}{2}+\alpha,\frac{\pi}{2}-\alpha\right] &\to \mathbb{H}, &\quad\psi &\mapsto -\frac{1}{4}+\frac{\cos \alpha}{4} \I \; e^{\I\psi},\\
  \gamma_4:  [-\alpha,0]  &\to \mathbb{H},&\quad\psi &\mapsto \frac{1}{2}+\frac{\sin \alpha}{4} \I \; e^{-\I\psi},\\
   \gamma_5:  [0,\alpha]  &\to \mathbb{H},&\quad\psi &\mapsto -\frac{1}{2}+\frac{\sin \alpha}{4} \I \; e^{\I\psi},\\
   \gamma_6: \left[\frac{\sin \alpha}{4},\infty\right) &\to \mathbb{H},&\quad\psi &\mapsto \frac{1}{2} + \I\psi,\\
   \gamma_7: \left[\frac{\sin \alpha}{4}, \infty \right) &\to \mathbb{H}, &\quad\psi &\mapsto +\frac{1}{2} + \I\psi;
 \end{alignedat} 
 \]
for brevity, we use the complex coordinate $x+ \I y$ on $\mathbb{H}$.

 Note that $ \gamma_2, \gamma_4$ and $\gamma_6$ are the images of $\gamma_3, \gamma_5$ and $\gamma_7$ respectively
 under the reflection about the $y$-axis $x+ \I y \mapsto -x + \I y$. 
 Figure \ref{fig10} depicts the fundamental domain decomposed into a compact part (darker shading)  and a cusp (lighter shading).
 The surface $C_{\ell,\tau}$ is formed as follows.
 The infinite geodesic  $\gamma_6$ is identified with $\gamma_7$ using the hyperbolic translation $z \mapsto z - 1$.
 The geodesic arc $\gamma_2$ is identified with $\gamma_3$ using the hyperbolic motion along $\gamma_1$. 
 Once these identifications are completed both $\gamma_1$ and $\gamma_4 \cup \gamma_5$ become closed boundary geodesics
 of length $\ell$.  These boundary components can be glued together as follows. First shift all points on $\gamma_1$ by $\tau \ell$.
 Then use hyperbolic translation along $\gamma_2$ and $\gamma_3$ to map the geodesic onto $\gamma_4 \cup \gamma_5$.
 The resulting surface $C_{\ell,\tau}$ is a surface of genus one with one cusp such as the one depicted in Figure \ref{fig2}.  The light-shaded region in Figure  \ref{fig10} gives a hyperbolic surface $M$ of genus one with horocyclic boundary.
 
\begin{figure}[htb!]
\begin{center} \includegraphics{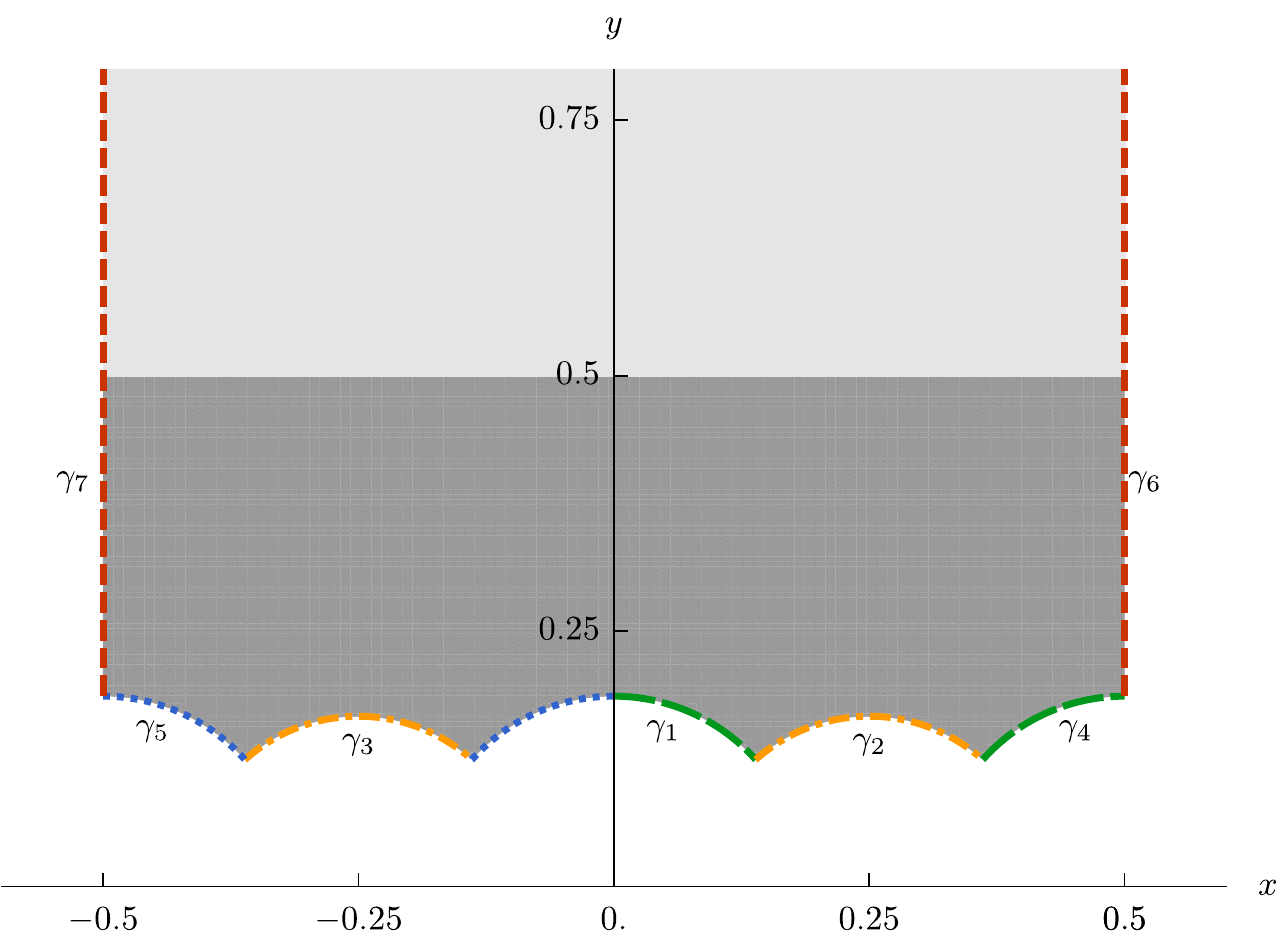}
\caption[Fundamental domain for a hyperbolic surface of genus one with one cusp]{Fundamental domain for a hyperbolic surface of genus one with one cusp, shown here is $C_{2\arccosh\left(\frac{3}{2}\right), \frac{1}{2}}$. Identified boundary arcs are shown in the same colour. The shading indicates the decomposition into a cusp and a compact part}
\label{fig10}
\end{center}
\end{figure}
 
In addition to the closed geodesics of length $\ell$, $C_{\ell,\tau}$ has another closed geodesics of length
\begin{equation}\label{eq:ell'}
\ell'=\ell'(\ell,\tau)=\arccosh\left(\frac{\cosh(\ell\tau)\left(\cosh\left(\frac{\ell}{2}\right)\right)^2+1}{\left(\sinh\left(\frac{\ell}{2}\right)\right)^2}\right).
\end{equation}   
The two lengths are equal whenever 
\[
\tau=\tau^*(\ell)=\frac{\arccosh(\cosh\ell -2)}{\ell},
\]
or equivalently when $\ell=\ell^*(\tau)$ is the positive solution of
\[
\cosh\ell=2+\cosh(\ell\tau).
\]  

 \subsubsection{Known properties of the spectrum}
 
 In general the Laplace operator on the surface will have simple continuous spectrum and may have embedded eigenvalues. The expectation is however, that these embedded are generically absent. There are several special cases for which the surface $C_{\ell,\tau}$ is symmetric, and therefore a symmetry reduction can be performed. We will single out and discuss several particular families.
 
 \subsubsection{Numerical results,  case 1: $\tau=0$, varying $\ell$}

With the twist parameter $\tau$ fixed the only remaining parameter is the length parameter $\ell$. Since in this case the twist is zero, the curve $\gamma_2$ becomes a closed simple geodesic of length 
 \begin{equation}\label{eq:ellprime0}
\ell'=\ell'(\ell,0)=\arccosh\left(1+\frac{2}{\sinh^2\left(\frac{\ell}{2}\right)}\right).
 \end{equation}
on the resulting surface. As the function $\ell'(\ell,0)$ is monotone decreasing in $\ell$, $\ell'(\ell'(\ell,0),0)\equiv \ell$, and also $\ell=\ell'(\ell,0)$ when $\ell=\ell^*(0)=\arccosh(3) \approx 1.762747$, our parametrisation of $C_{\ell,0}$ is not unique: namely, the surfaces $C_{\ell,0}$ and $C_{\ell'(\ell,0),0}$ are always isometric. Therefore it only makes sense to track resonances for $\ell\le \ell^*(0)$. We have nevertheless analysed some special values of $\ell> \ell^*(0)$ to verify that our numerical results do not depend on the choice of parametrisation.

We have tracked the resonances in the interval $\ell \in [1.2,\arccosh(3)]$, see Video \ref{video:3}, and also  Figure \ref{fig11} for the trajectories traced by four selected resonances. 

\begin{video}[htb!] 
\centering
\ovalbox{{\href{http://michaellevitin.net/hyperbolic.html#video3}{\texttt{michaellevitin.net/hyperbolic.html\#video3}}}}

\ovalbox{{\href{https://youtu.be/Li6Azx01IG4}{\texttt{youtu.be/Li6Azx01IG4}}}}

\caption{The  dynamics of the resonances for $C_{\ell,0}$ as $\ell$ changes\label{video:3}}
\end{video} 
 
 \begin{figure}[htb!]
\begin{center} 
\includegraphics{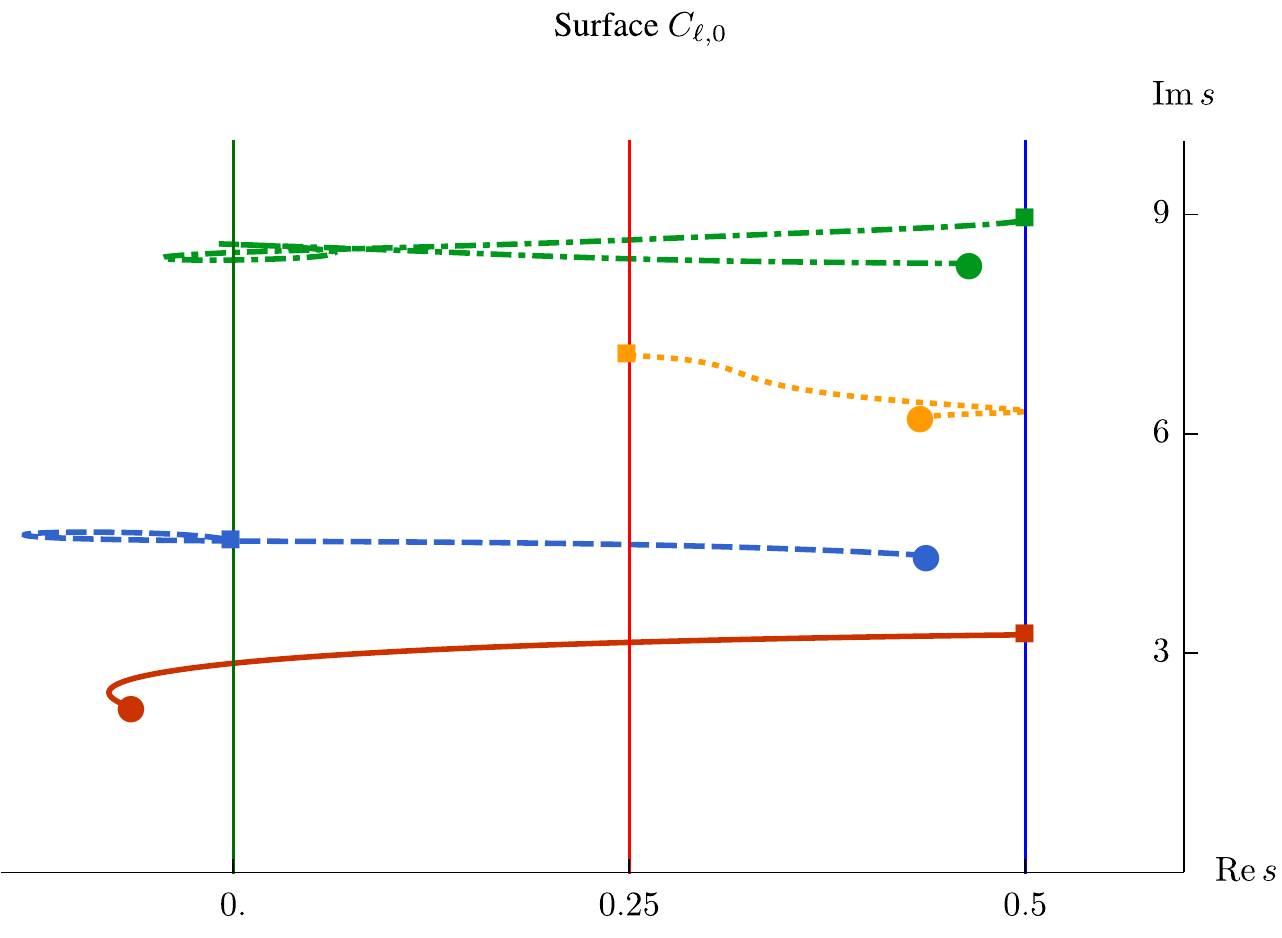}
 \caption[Trajectories of four selected resonances for $C_{\ell, 0}$]{Trajectories of four selected resonances for $C_{\ell, 0}$, $\ell\in[1.2,\arccosh(3)]$}
 \label{fig11}
 \end{center}
 \end{figure}  

In this interval there are several special lengths. The numerically found resonances and the first ten embedded eigenvalues for these special lengths are in Tables \ref{table:genus-one-no-twist-special-ell} and  \ref{table:genus-one-special-ell-evalues}, resp.

\subsubsection*{\underline{$\ell=\arccosh(2) \approx 1.316957$}}
In this special case the scattering matrix takes the form
 \begin{equation}\label{eq:no-twist-C-special1}
  C(s) = 2^{1-2s} \frac{1+2^{1-s}}{1+2^{s}}  \frac{1+3^{1-s}}{1+3^{s}} \frac{\Lambda(2s -1)}{ \Lambda(2s) }.
\end{equation}
Indeed, one can conjugate the generators of the corresponding Fuchsian group into the following three matrices
\[
\frac{1}{\sqrt{6}} \left( \begin{matrix} 3 & 3 \\ 6 & 3 \end{matrix} \right), \; \frac{1}{\sqrt{6}} \left( \begin{matrix} 6 & 1 \\ -6 & 0 \end{matrix} \right),\; \left( \begin{matrix} 1 & 2 \\ 0 & 1 \end{matrix} \right).
\]
These clearly generate a subgroup $\Gamma$ of the arithmetic group $\tilde{\Gamma}_0(6)$, that is the group  generated by $\Gamma_0(6)$ together with its Atkin-Lehner (Fricke) involutions.
Since the Atkin-Lehner involutions act transitively on the four cusps of $\Gamma_0(6) \backslash \mathbb{H}$ the domain $\tilde \Gamma_0(6) \backslash \mathbb{H}$
has only one cusp. The scattering matrix for $\Gamma_0(N)$ has been computed in \cite{hejhal1976selberg} and \cite{MR803366}. If $N$ is square-free then according to Hejhal  \cite[Vol 2, p 536]{hejhal1976selberg}
the full scattering matrix equals
\begin{gather} \label{scatterrefprev} 
 C(s) = \frac{\Lambda(2s -1)}{ \Lambda(2s)} \bigotimes_{\substack{q \vert N\\q\text{ prime}}} M_q(s),
\end{gather}
where
\[
M_q(s) = \frac{1}{q^{2s}-1} \left( \begin{matrix} q-1 & q^s-q^{1-s} \\ q^s-q^{1-s}  &  q-1  \end{matrix}\right).
\]
The vector  $\left( \begin{matrix}  1 \\ 1 \end{matrix} \right)$ is an eigenvector of $M_q(s)$ with eigenvalue $\frac{1+q^{1-s}}{1+q^s}$. 
The Atkin-Lehner involutions act transitively on the cusps for square-free $N$. Therefore, in this case, $\tilde \Gamma_0(N)$, the group generated by the Atkin-Lehner involutions and $\Gamma_0(N)$, gives rise to a quotient with one cusp. The scattering matrix for $\tilde \Gamma_0(N)$  must then be the restriction of the scattering matrix to functions invariant under the Atkin-Lehner involutions. The invariant functions correspond to the the span of the vector $\bigotimes\limits_{q \mid N}\left( \begin{matrix}  1 \\ 1 \end{matrix} \right)$ in this representation of the scattering matrix. This vector is an eigenvector of $\bigotimes\limits_{q \vert N} M_q(s)$ with eigenvalue  $\prod\limits_{q \vert N} \frac{1+q^{1-s}}{1+q^s}$.
Summarising, in the case of square-free $N$, the scattering matrix for $\tilde \Gamma_0(N)$ is given by
\begin{equation}\label{scatterref} 
 C(s) =  \frac{\Lambda(2s -1)}{ \Lambda(2s)} \prod_{\substack{q \vert N\\q\text{ prime}}} \frac{1+q^{1-s}}{1+q^{s}}.
\end{equation}
This has also been obtained in \cite[Lemma 5]{JST13}.
Since  $\tilde \Gamma_0(6)$ acts on the fundamental domain for our group but leaves the cusp invariant,
the scattering matrix for $\Gamma$ must be the same, apart from the extra factor $2^{1-2s}$ appearing because of the cusp width $2$.
We refer to \cite{venkov} for details of this argument.

Figure \ref{fig13} shows the computed resonances for $\ell=\arccosh(2)$.

\subsubsection*{\underline{$\ell=\arccosh(3) \approx 1.762747$}}
In this case $\alpha=\dfrac{\pi}{4}$, so all the boundary arcs of $C_{\ell, 0}$ have the same radius $\dfrac{\sqrt{2}}{8}$. Also, the length $\ell'(\ell,0)$ of the second distinguished closed geodesic $\gamma_2$ coincides with $\ell$. We can carry out the following sequence of symmetry reductions, see Figure \ref{fig12}.

\begin{figure}[htb!]
\begin{center} 
\includegraphics{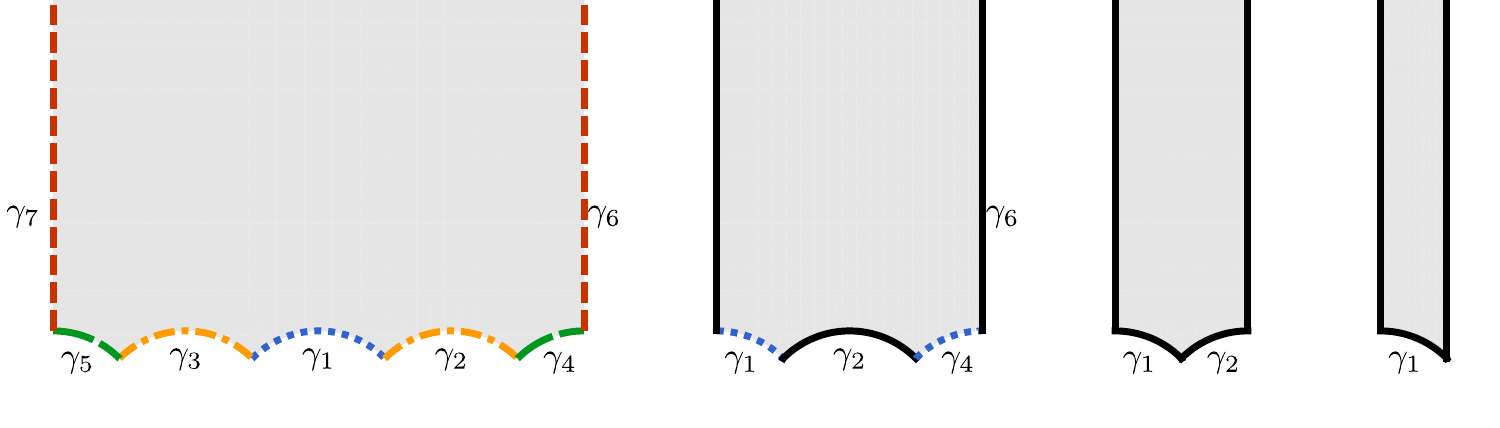}
\caption[Sequence of symmetry reductions of $C_{\ell, 0}$, $\ell=\arccosh(3)$]{Sequence of symmetry reductions of $C_{\ell, 0}$, $\ell=\arccosh(3)$. The arcs  are identified with the dashed counterpart of the same colour. Neumann condition is imposed on arcs coloured black}
\label{fig12}
\end{center}
 \end{figure}

 First of all the domain $X=C_{\ell, 0}$ has a reflection symmetry $x \mapsto -x$. It follows that we have a natural decomposition of $L^2(X)$ into invariant subspaces for $\Delta$
 consisting of even and odd functions. On the subspace of odd functions the spectrum of $\Delta$ is discrete since it is part of the space of cups forms, and the Eisenstein series are all even.
 The subspace of even functions corresponds to the space of functions on half of the domain $\{ z = x+ \I y \in X \mid x \geq 0 \}$ satisfying Neumann boundary conditions
 along $\gamma_6$, $\gamma_2$ and along the $y$-axis part of the boundary of the resulting domain and periodic boundary conditions
 that identify $\gamma_4$ with the right half of $\gamma_1$. Now we have another symmetry
 $x \mapsto \dfrac{1}{4}-x$. Again the space of odd functions is contained in the space of cusp forms and the even part corresponds to considering the domain
 $\left\{ z = x+ \I y \in X \mid 0\le x \le \dfrac{1}{4}\right\}$ with Neumann boundary conditions everywhere along its boundary. The resulting domain has yet another symmetry
 $x \mapsto \dfrac{1}{4}-x$. The Laplace operator on the space of even functions on this domain corresponds the Laplace operator on the domain
 $\left\{ z = x+ \I y \in D \mid 0\le x \leq \dfrac{1}{8}\right \}$ with Neumann boundary conditions everywhere along its boundary.  Since the symmetry reduction of Artin's billiard
 for $r = \dfrac{1}{\sqrt{2}}$ leads, after scaling by a factor $\dfrac{1}{4}$, to an isometric domain this shows that the continuous spectral subspace of $C_{\arccosh(3),0}$ and that of $B_{\frac{1}{\sqrt{2}}}$ are unitarily equivalent  and the scattering matrices as well as the resonances coincide up to a scaling factor. One therefore has
 \begin{equation}\label{eq:no-twist-C-special2}
  C(s) =  4^{1-2s} \frac{1+2^{1-s}}{1+2^{s}} \frac{\Lambda(2s -1)}{ \Lambda(2s) }.
\end{equation}
 The above discussion also shows that the discrete spectrum
 consists of several parts, each belonging to mixed Dirichlet-Neumann problems on certain domains.
 
Figure \ref{fig13} shows the computed resonances for $\ell=\arccosh(3)$.

In a similar way as before one can conjugate the generators of the corresponding Fuchsian group into 
\[
 \frac{1}{\sqrt{2}} \left( \begin{matrix} 2 & 1 \\ 2 & 2 \end{matrix} \right), \; \frac{1}{\sqrt{2}} \left( \begin{matrix} 4 & 1 \\ -2 & 0 \end{matrix} \right),\; \left( \begin{matrix} 1 & 4 \\ 0 & 1 \end{matrix} \right),
\]
which is a subgroup of the arithmetic group $\tilde \Gamma_0(2)$. Equation \eqref{eq:no-twist-C-special2} can therefore also be derived from \eqref{scatterref} in the same way as before.

\subsubsection*{\underline{$\ell=\arccosh(5) \approx 2.29243$}}
This case is isometric to the case $\ell=\arccosh(2)$ since these two lengths are related by \eqref{eq:ellprime0}, see also the discussion following that formula.

\subsubsection*{\underline{$\ell=\arccosh(9) \approx  2.88727$}}
This case is isometric to the case $\ell=\left(\arccosh\left(\frac{3}{2}\right)\right)\approx0.962424$ which lies outside our computed range. In this case the scattering matrix is given by
 \begin{equation}\label{eq:no-twist-C-special3}
  C(s) = 2^{1-2s}  \frac{1+5^{1-s}}{1+5^{s}} \frac{\Lambda(2s -1)}{ \Lambda(2s) }.
\end{equation}
The generators of the corresponding Fuchsian group can be conjugated to
\[
 \frac{1}{\sqrt{5}} \left( \begin{matrix} 15 & 8 \\ -10 & -5 \end{matrix} \right), \; \frac{1}{\sqrt{5}} \left( \begin{matrix} 0 & -1 \\ 5 & 5 \end{matrix} \right),\; \left( \begin{matrix} 1 & 2 \\ 0 & 1 \end{matrix} \right).
\]
These therefore generate a subgroup $\Gamma$ of $\tilde \Gamma_0(5)$. The surface $\tilde \Gamma_0(5) \backslash \mathbb{H}$ has one cusp. The group $\tilde \Gamma_0(5)$ acts on our surface $G \backslash \mathbb{H}$ 
and the action fixes the cusp. This implies that the scattering matrices of $G$ and of $\tilde \Gamma_0(5)$ coincide
modulo a possible factor coming from the normalisation of the cusp-width. 
In the same way as before, equation  \eqref{scatterref} (see also \cite[equation (5)]{MR2398794}) gives the formula \eqref{eq:no-twist-C-special3}. 

Figure \ref{fig13} shows the computed resonances for $\ell=\arccosh(9)$.
 
\begin{figure}[htb!]
\begin{center} 
\includegraphics{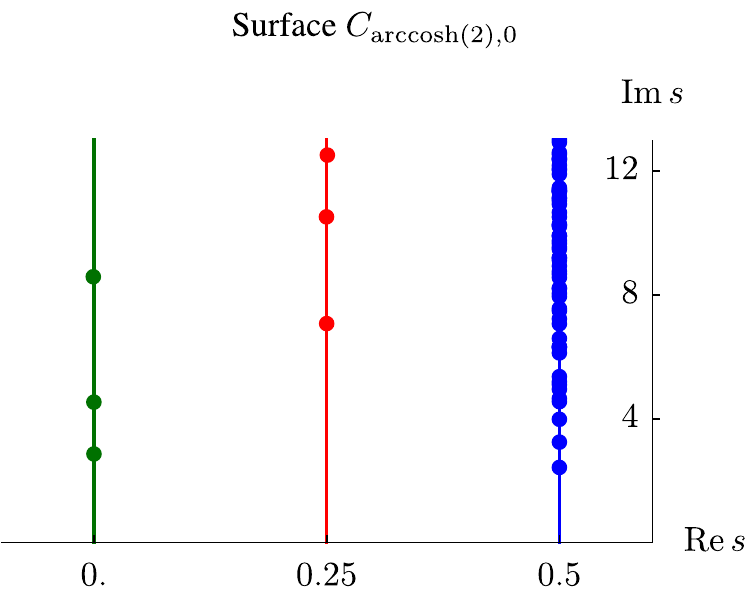}\hfill\includegraphics{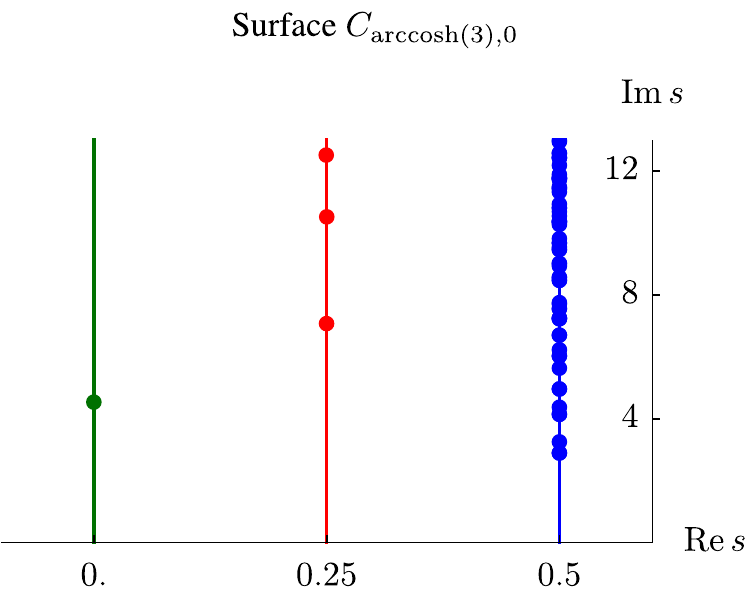}\\
\includegraphics{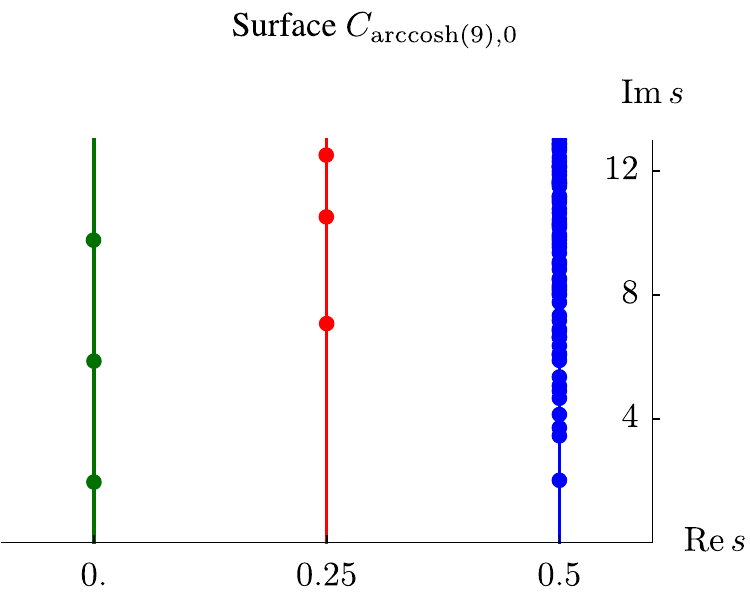}
 \caption{Resonances and embedded eigenvalues for $C_{\ell, 0}$ with special lengths $\ell$}
 \label{fig13}
 \end{center}
 \end{figure} 
 
 \subsubsection{Numerical results,  case 2: $\tau=\frac{1}{2}$, varying $\ell$}
 
 We have tracked the resonances in the interval $\ell \in [1.12485,2.72485]$, see Video \ref{video:4}, and also  Figure \ref{fig14} for the trajectories traced by four selected resonances. 

\begin{video}[htb!] 
\centering
\ovalbox{{\href{http://michaellevitin.net/hyperbolic.html#video4}{\texttt{michaellevitin.net/hyperbolic.html\#video4}}}}

\ovalbox{{\href{https://youtu.be/2kn2ZWYObAE}{\texttt{youtu.be/2kn2ZWYObAE}}}}

\caption{The  dynamics of the resonances for $C_{\ell,1/2}$ as $\ell$ changes\label{video:4}}
\end{video}

\begin{figure}[htb!]
\begin{center} 
\includegraphics{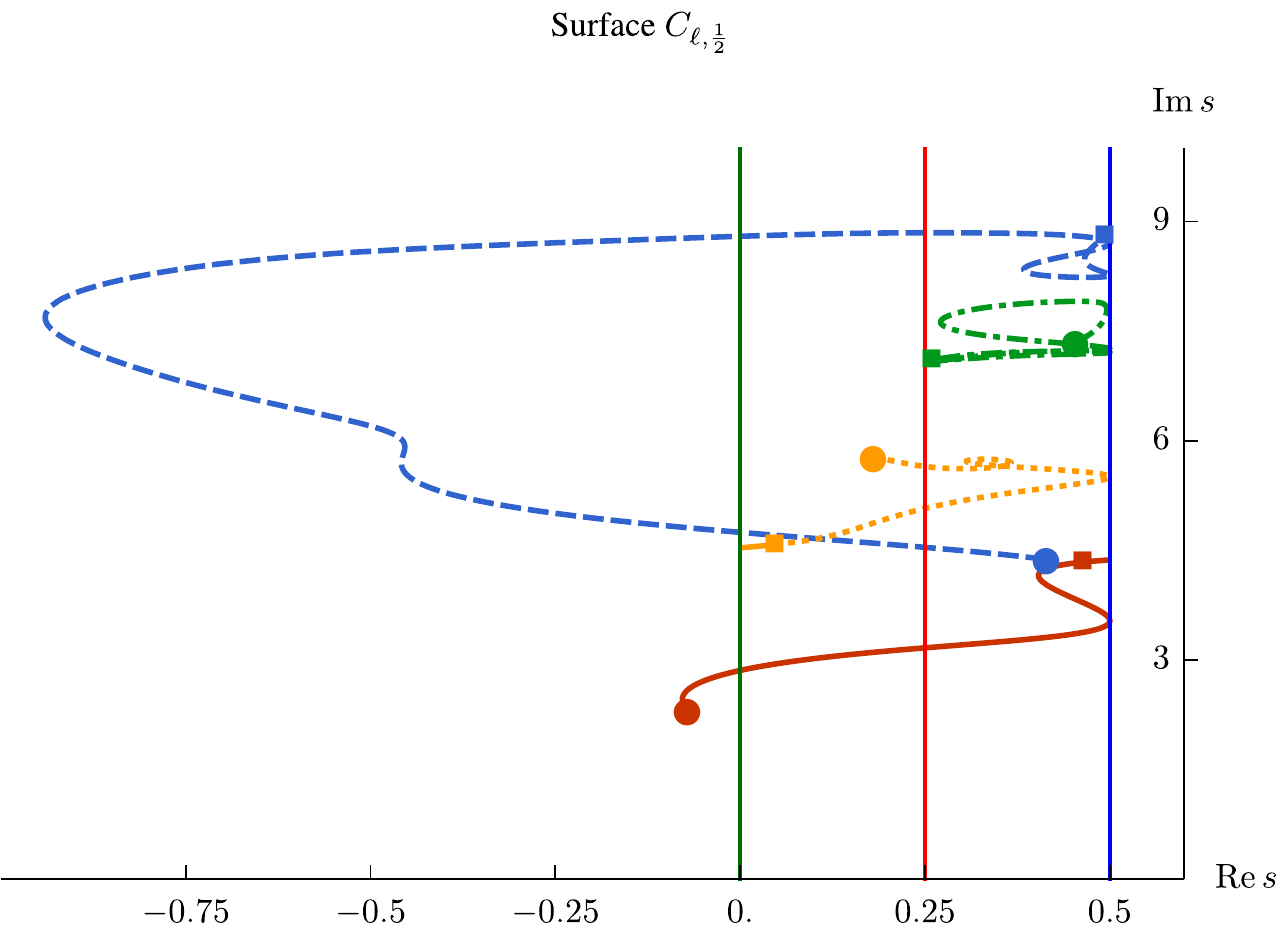}
\caption[Trajectories of four selected resonances for $C_{\ell, 1/2}$]{Trajectories of four selected resonances for $C_{\ell, 1/2}$, $\ell\in[1.12485,2.72485]$}
\end{center}
\label{fig14}
\end{figure}  
 
For twist $\tau=\frac{1}{2}$ we find the following special lengths.  The numerically found resonances and the first ten embedded eigenvalues for these special lengths are in Tables \ref{table:genus-one-half-twist-special-ell} and  \ref{table:genus-one-special-ell-evalues}, resp.

\subsubsection*{\underline{$\ell=2\arccosh\left(\frac{3}{2}\right) \approx 1.924847$}}
One can check by direct computation that for this particular $\ell$ the twist parameter $\tau=\frac{1}{2}$ is the unique twist for which the length of the second simple closed geodesic
 generating the fundamental group coincides with $\ell$. Any hyperbolic surface of genus one with one cusp that possesses two simple closed curves of that length 
  that intersect in one point only will therefore be isometric to this surface. In particular, it is isometric to the arithmetic one punctured torus described by Cohn in \cite{greenberg1974discontinuous} and by Gutzwiller in \cite{gutzwiller1983stochastic}. The scattering matrix is known to be equal to
\begin{equation}\label{eq:half-twist-C-special1}
   C(s) = 6^{1-2s}  \frac{\Lambda(2s -1)}{ \Lambda(2s) },
\end{equation}
  where the extra factor $6^{1-2s}$ relative to \cite{gutzwiller1983stochastic} is because the cusp width in \cite{gutzwiller1983stochastic} was chosen to be $6$ rather than one.
  Its scattering resonances coincide with the one for the modular domain and are there directly related to the non-trivial zeros of the Riemann zeta function. Figure \ref{fig15} shows the computed resonances for $\ell=2\arccosh\left(\frac{3}{2}\right)$.
 The form of the scattering matrix \eqref{eq:half-twist-C-special1} can also be derived as follows. 
The generators of the Fuchsian group can be conjugated into
\[
\left( \begin{matrix} 2 & 1 \\ 1 & 1 \end{matrix} \right), \;  \left( \begin{matrix} 0 & -1 \\ 1 & 3 \end{matrix} \right),\; \left( \begin{matrix} 1 & 6 \\ 0 & 1 \end{matrix} \right),
\]
which is a subgroup of $\mathrm{PSL}(2,\R)$. Therefore, $\mathrm{PSL}(2,\R)$ acts on our surface and fixes the cusp. Hence, the scattering matrix coincides with that of the modular domain up to a factor $6^{1-2s}$, since the generator
 $ \left( \begin{matrix} 1 & 6 \\ 0 & 1 \end{matrix} \right)$ yields a cusp of width $6$.

The special values close to the critical line and to the imaginary line are compared with theoretical prediction of \eqref{eq:half-twist-C-special1} in Table \ref{table:genus-one-half-twist-special-ell}.  
 
 We list some embedded eigenvalues for the twist parameter $\tau=\frac{1}{2}$ in Table \ref{table:genus-one-special-ell-evalues}. Note that some of the double eigenvalues coincide with those for the group $\Gamma^3$ from \cite{str}. Additionally, some embedded eigenvalues for the twist parameter $\tau=\frac{1}{2}$
  have been computed in \cite{kar2013computation}, however the authors have missed quite a few embedded eigenvalues in their list. They correctly identify
  two multiplicity two eigenvalues at 2.95648 and 4.51375, but do for example miss the multiplicity two eigenvalue at about 3.53606 and the simple
  eigenvalue at about 3.70339, cf. Table \ref{table:genus-one-special-ell-evalues}.  We have performed a heuristic check using Weyl's law and Turing's method
  and our list appears to be complete. 

\subsubsection*{\underline{$\ell=2\arccosh(2) \approx 2.6339157$}}

This case can be shown to be isometric to the surface $C_{\arccosh{3},0}$ by computing the generators, and our independent numerical results are in full agreement.

  Figure \ref{fig15} shows the computed resonances for  $\ell=2\arccosh(2)$.
 
\begin{figure}[htb!]
\begin{center} 
\includegraphics{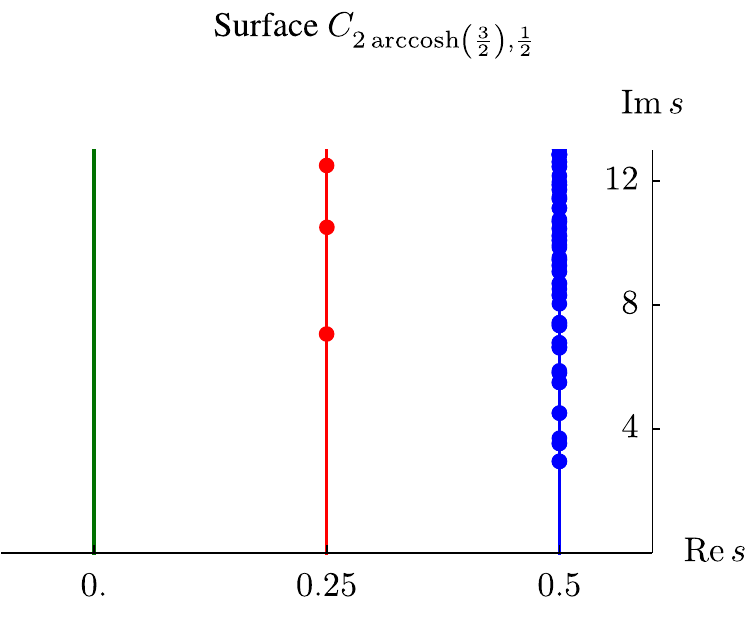}\hfill
\includegraphics{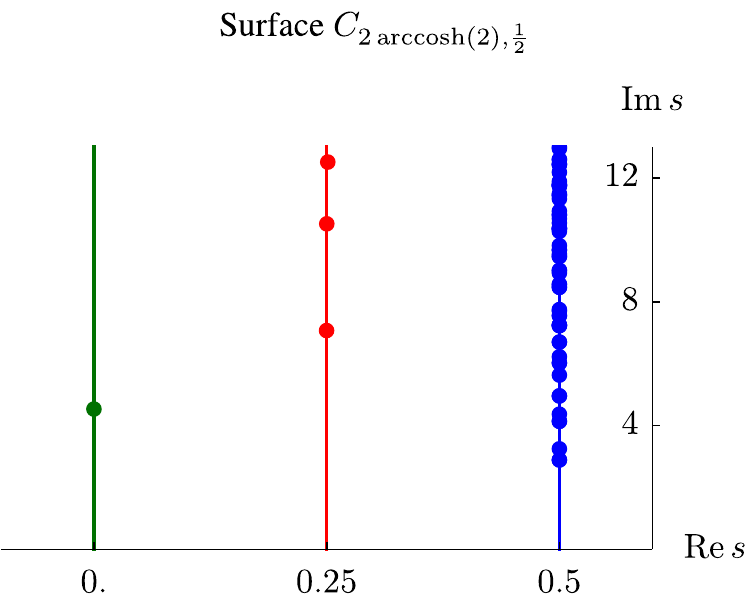}
 \caption{Resonances for $C_{\ell, 1/2}$ for special lengths $\ell$}
 \label{fig15}
 \end{center}
 \end{figure}

\subsubsection*{\underline{$\ell=2\arccosh(3) \approx 3.525494$}}
This surface is isometric to the one with $\ell=2\arccosh\left(\frac{3}{2}\right)$, and our independent numerical results are in full agreement.

\subsubsection{Numerical results,  case 3: $\ell=2\arccosh\left(\frac{3}{2}\right) \approx 1.924847$, varying $\tau$}
The dynamics of resonances is shown in Video \ref{video:5}.

\begin{video}[htb!] 
\centering
\ovalbox{{\href{http://michaellevitin.net/hyperbolic.html#video5}{\texttt{michaellevitin.net/hyperbolic.html\#video5}}}}

\ovalbox{{\href{https://youtu.be/Qk3rmvT7goY}{\texttt{youtu.be/Qk3rmvT7goY}}}}

\caption{The  dynamics of the resonances for $C_{2\arccosh\left(\frac{3}{2}\right) ,1/2}$ as $\tau$ changes in the interval $[0,0.5]$\label{video:5}}
\end{video} 
 
 \subsubsection{Numerical results,  case 4: equal length geodesics,  varying $\tau$ and $\ell=\ell^*(\tau)$}
 
 The dynamics of resonances is shown in Video \ref{video:6}.

\begin{video}[htb!] 
\centering
\ovalbox{{\href{http://michaellevitin.net/hyperbolic.html#video6}{\texttt{michaellevitin.net/hyperbolic.html\#video6}}}}

\ovalbox{{\href{https://youtu.be/7_yBpOxoY9I}{\texttt{youtu.be/7\textunderscore yBpOxoY9I}}}}

\caption{The  dynamics of the resonances for $C_{\ell^*(\tau) ,\tau}$ as $\tau$ changes in the interval $[0,0.489]$\label{video:6}}
\end{video} 

\begin{remark} \label{Remark-comp}
 There are precisely four isomorphism classes of smooth arithmetic surfaces of genus one with one cusp (see \cite{MR698343}, and also \cite{MR702765}). One can use the generators for the four surfaces
 $C_{\arccosh(2),0}$, $C_{\arccosh(3),0}$, $C_{\arccosh(9),0}$, $C_{2 \arccosh\left(\frac{3}{2}\right),\frac{1}{2}}$ and identify them, using \cite[Theorem 4.1]{MR702765}, with the four known arithmetic cases.
 We discovered these special parameters by looking for values of the Fenchel-Nielsen parameters for which the resonances are all along critical lines. The numerical data and the location of the scattering poles then allowed us to conjecture formulae for the scattering matrix. We are very grateful to Andreas Str\"ombergsson, who saw the relation to $\tilde \Gamma_0(N)$ from the formulae and was willing to share his expertise on Atkin-Lehner theory. This made it possible to provide proofs for the corresponding formulae  \eqref{eq:no-twist-C-special1} and \eqref{eq:no-twist-C-special3}.
\end{remark}

\subsection{$D$. The hyperbolic surface of genus zero with three cusps}
 
\subsubsection{Description of the surface}

This surface is unique up to isometry and can be constructed as follows. Take the domain in the upper half space with boundary given by
the the four curves $\gamma_1, \gamma_2, \gamma_3$ and $\gamma_4$ (see  Figure  \ref{fig16}).
Here $\gamma_1$ and  $\gamma_2$ are the two half-circles of radius $\frac{1}{2}$ centered at $z=\frac{1}{4}$ and $z=-\frac{1}{4}$ respectively.
The curves $\gamma_3$ and $\gamma_4$ are the half lines perpendicular to the real axis originating from $z=-\frac{1}{2}$ and $z=\frac{1}{2}$ respectively.
The surface is obtained by identifying $\gamma_1$ and $\gamma_2$, as well as $\gamma_3$ and $\gamma_4$. The three cusps are then located at $z=0$, $z=\frac{1}{2}$,
and at infinity. The surface can also be obtained as a quotient of the upper half space by the subgroup $\Gamma_0(4)$ in $\mathrm{PSL}(2,\R)$ which is generated by the matrices
$\left( \begin{matrix} 1 & 1 \\ 0 & 1 \end{matrix} \right)$ and $\left( \begin{matrix} 1 & 0\\ 4 & 1 \end{matrix} \right)$.
The cusps at $z=0$ and $z=\frac{1}{2}$ can be removed from the surface by cutting along a horocycle (see  Figure \ref{fig16}) and one then obtains two cusps. Each cusp is isometric to a standard cusp of some height. Removing the three cusps in this way one remains with a compact surface with three boundary components. This corresponds to the darker shaded region in Figure  \ref{fig16}. Note that the points $z=\frac{1}{2}$ and $z=-\frac{1}{2}$ (belonging to the compactification of the hyperbolic plane) are identified.

\begin{figure}[htb!]
\begin{center}
\includegraphics{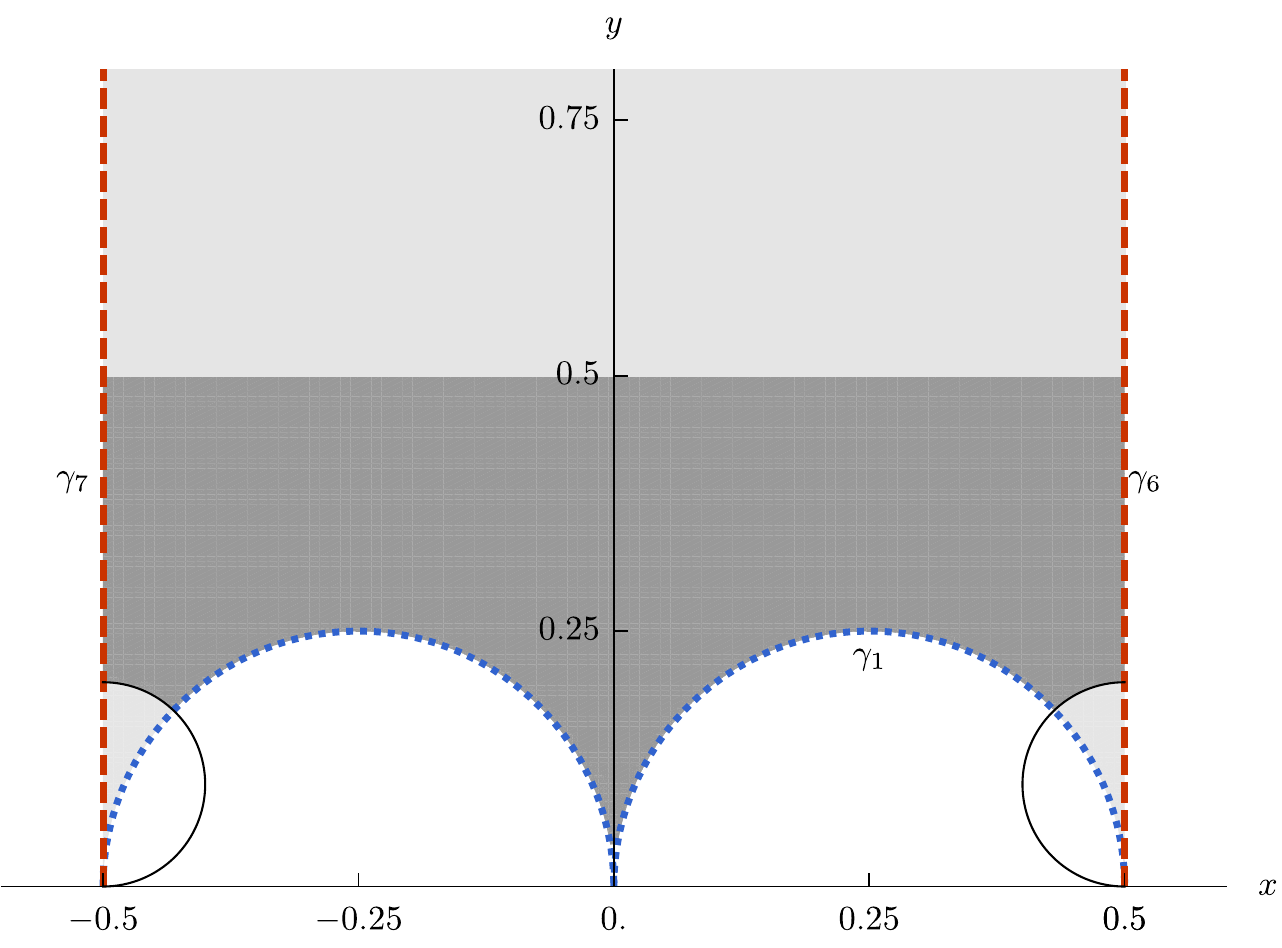}
\caption[Fundamental domain for a hyperbolic surface of genus zero with three cusp]{Fundamental domain for a hyperbolic surface of genus zero with three cusp. The shading indicates the decomposition into cusps and compact part}
 \label{fig16}
 \end{center}
 \end{figure}
  
\subsubsection{Numerical results} Here the scattering matrix is a $3 \times 3$ matrix, and our algorithm computes this reliably. In order to find the resonances we locate the zeros of the determinant
of the scattering matrix and make use of the functional equation. 
Numerically we find that the resonances in this case are of multiplicity three at half the non-trivial roots of the Riemann zeta function, with additional resonances of multiplicity two
at the points
$\frac{\I k \pi}{\log{2}},\; k \in \mathbb{Z} \backslash \{0\}$, see Figure \ref{fig17} and Table \ref{table:genus-0-3cusps}.
Our root finding algorithm finds roots very close to one another in the case of multiplicities. It factors out an already detected root from the function and is therefore able to detect other roots close to the already found one. We can not distinguish numerically between true multiplicities and resonances that are very close to one another.
What we find numerically is in excellent agreement with the known value of the scattering matrix for $\Gamma_0(4)$ \cite{Bruggemann}:
\[
 C(s) =  \frac{1}{2^{2s}-1} \frac{\Lambda(2s -1)}{ \Lambda(2s) }  \begin{pmatrix} 2^{1-2s} & 1-2^{1-2s} & 1-2^{1-2s} \\
 1-2^{1-2s}  & 2^{1-2s} & 1-2^{1-2s} \\ 1-2^{1-2s}  & 1-2^{1-2s} & 2^{1-2s},
   \end{pmatrix}
\]
and the resonances $\frac{\I k \pi}{\log{2}},\; k \in \mathbb{Z} \backslash \{0\}$ are again due to the rational factors in the scattering matrix.

\begin{figure}[htb!]
\begin{center} 
\includegraphics{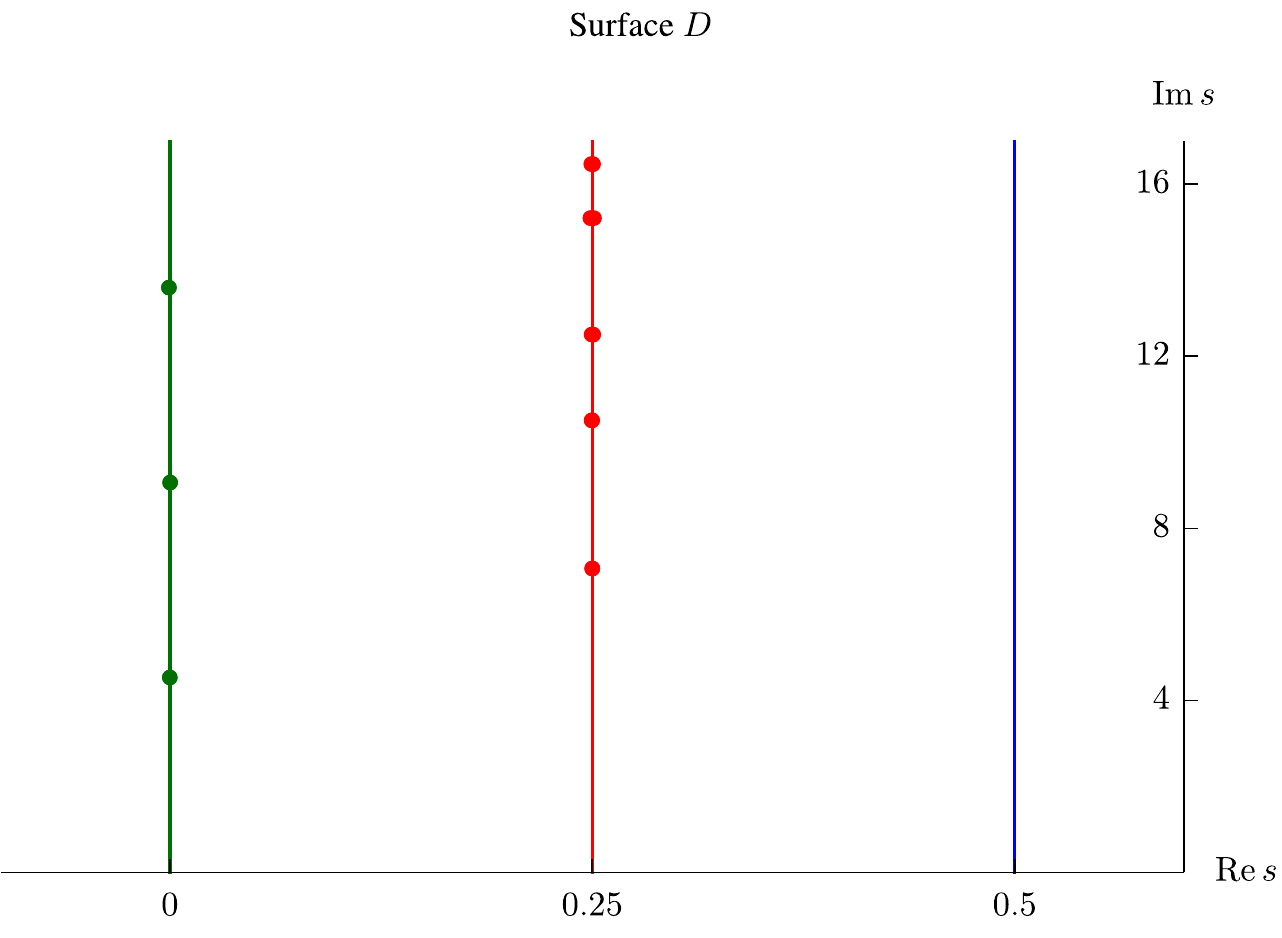}
\caption{Resonances for $D$}
\label{fig17}
\end{center}
\end{figure}  

\subsection*{Acknowledgements}\label{sec:ack}
\addcontentsline{toc}{section}{\nameref{sec:ack}}
 We are very grateful to Andreas Str\"ombergsson for important comments on a draft version of this paper (see Remark \ref{Remark-comp}). We also thank an anonymous referee for many valuable comments and suggestions on an earlier version of the paper.

\clearpage



\newpage
\begin{appendices}
\section{Tables of resonances and eigenvalues}

\begin{table}[hbt!]
 \caption[Eigenvalues $\lambda=\frac{1}{4}+t^2$  for the space of odd functions on $A_0$]{Eigenvalues $\lambda=\frac{1}{4}+t^2$  for the space of odd functions on  $A_0$. Data  from \cite{BSVdata} for comparison}
 \label{table:modularodd}
  \begin{center} 
  \begin{tabsize}
  \begin{tabular}{cc}
  \toprule
Computed $t$& Data from \cite{BSVdata}\\
  \midrule
9.53369&9.5336\dots\\ 
12.1730&12.1730\dots\\
14.3585&14.3585\dots\\
16.1381&16.1380\dots\\ 
16.6443&16.6442\dots\\
18.1809&18.1809\dots\\ 
19.4847&19.4847\dots\\
  \bottomrule
  \end{tabular}
  \end{tabsize}
     \end{center}
  \end{table}

\begin{table}[htb!]
\caption{Resonances for $B_r$}
\label{table:artin-even-resonances}
 \begin{center} 
   \begin{tabsize}
\begin{tabular}{ccccc}
  \toprule
    \multicolumn{4}{c}{Computed resonances for}& \multirow[t]{2}{*}{Poles of \eqref{eq:Conj1sqrt3}--\eqref{eq:Conj1}}\\
$B_1=A_0$&$B_{1/\sqrt{2}}$&$B_{1/\sqrt{3}}$&$B_{0.5001}$\\
\midrule
0.2500 + 7.0674 $\I$& 0.2499 + 7.0676 $\I$& 0.2501 + 7.0681 $\I$& 0.2499 + 7.0707 $\I$&$\zeta_1/2\approx$ 0.2500 + 7.0674 $\I$\\
0.2500 + 10.5110 $\I$& 0.2499 + 10.5116 $\I$& 0.2502 + 10.5129 $\I$& 0.2484 + 10.5182 $\I$&$\zeta_2/2\approx$ 0.2500 + 10.5110 $\I$\\
0.2500 + 12.5054 $\I$& 0.2504 + 12.5063 $\I$& 0.2497 + 12.5093 $\I$& 0.2516 + 12.5168 $\I$&$\zeta_3/2\approx$ 0.2500 + 12.5054 $\I$\\
0.2500 + 15.2125 $\I$& 0.2495 + 15.2145 $\I$& 0.2489 + 15.2165 $\I$& 0.2470 + 15.2297 $\I$&$\zeta_4/2\approx$ 0.2500 + 15.2124 $\I$\\
0.2501 + 16.4676 $\I$& 0.2495 + 16.4700 $\I$& 0.2491 + 16.4747 $\I$& 0.2528 + 16.4889 $\I$&$\zeta_5/2\approx$ 0.2500 + 16.4675 $\I$\\
0.2500 + 18.7931 $\I$& 0.2496 + 18.7960 $\I$& 0.2497 + 18.8023 $\I$& 0.2419 + 18.8246 $\I$&$\zeta_6/2\approx$ 0.2500 + 18.7931 $\I$\\
0.2499 + 20.4594 $\I$& 0.2507 + 20.4635 $\I$& 0.2424 + 20.4683 $\I$& 0.2570 + 20.4880 $\I$&$\zeta_7/2\approx$ 0.2500 + 20.4594 $\I$\\
& & -0.0001 + 2.8597 $\I$&& $\pi\I/\log(3)\approx$ 2.8596 $\I$\\
&  -0.0000 + 4.5325 $\I$& & & $\pi\I/\log(2)\approx$ 4.5324 $\I$\\
& & -0.0006 + 8.5796 $\I$& &  $3\pi\I/\log(3)\approx$ 8.5788 $\I$\\
& 0.0002 + 13.5984 $\I$& & &  $3\pi\I/\log(2)\approx$13.5971 $\I$\\
& & 0.0007 + 14.3001 $\I$& & $5\pi\I/\log(3)\approx$14.2980 $\I$\\
  \bottomrule
  \end{tabular}
  \end{tabsize}
  \end{center}
  \end{table}

\begin{table}[htb!]
 \caption[Embedded eigenvalues $\lambda=\frac{1}{4}+t^2$  for the space of even functions on  $B_r$]{Embedded eigenvalues $\lambda=\frac{1}{4}+t^2$  for the space of even functions on  $B_r$. All eigenvalues have multiplicity one. $^*$ denotes eigenvalues for the so called old-forms missed in \cite{winkler1988cusp}}
\label{table:artin-ev}
  \begin{center} 
     \begin{tabsize}
  \begin{tabular}{cccccc}
  \toprule
 \multicolumn{2}{c}{$B_1=A_0$}&  \multicolumn{2}{c}{$B_{1/\sqrt{2}}$}&  \multicolumn{2}{c}{$B_{1/\sqrt{3}}$}\\
 Computed & Data from&  Computed & Data from & Computed & Data from\\
  $t$ &  \cite{BSVdata} &   $t$ &  \cite{hejhal92, winkler1988cusp} &  $t$ &  \cite{hejhal92, winkler1988cusp} \\
  \midrule
13.7798&13.7797\dots & 8.92297&8.92288&5.09885&5.09874\\
17.7387&17.7386\dots &10.9206&10.9204&8.03918&8.03886\\
19.4237&19.4847\dots& 13.7802&13.7798$^*$&9.74450&9.74375\\
&&14.6855&14.6852 &11.3470&11.3464\\
&&16.4044&16.4041&11.8906&11.8900\\
&&17.7394&17.7386$^*$ &13.1362&13.1351\\
&&17.8788&17.8780 &13.7810&13.7798$^*$\\
&&19.1261&19.1254 &14.6278&14.6262\\
&&19.4245&19.4235$^*$ &15.8012&15.7995\\
&&&&16.2727&16.2710 \\
&&&&16.7384&16.7362 \\
&&&&17.5021&17.5006 \\
&&&&17.7413&17.7385$^*$ \\
&&&&18.6501&18.6474 \\
&&&&18.9662&18.9626 \\
&&&&19.4268&19.4235$^*$ \\
&&&&19.8997&19.8961 \\
 \bottomrule
  \end{tabular}
     \end{tabsize}
  \end{center}
  \end{table}

\begin{table}[htb!]
\caption[Resonances for $C_{\ell, 0}$ when $\ell$ is a special length]{Resonances for $C_{\ell, 0}$ when $\ell$ is a special length. The actual computed values for  $C_{\arccosh\left(5\right), 0}$ may differ by one in the last digit from those shown in the first column}
\label{table:genus-one-no-twist-special-ell}
  \begin{center} 
  \begin{tabsize}
  \begin{tabular}{cccc}
  \toprule
  \multicolumn{3}{c}{Computed resonances for}& \multirow[t]{4}{*}{Poles of \eqref{eq:no-twist-C-special1}--\eqref{eq:no-twist-C-special3}}\\
$C_{\arccosh\left(2\right), 0}$&  \multirow[t]{3}{*}{$C_{\arccosh\left(3\right), 0}$}&\multirow[t]{3}{*}{$C_{\arccosh\left(9\right), 0}$}\\
and\\
$C_{\arccosh\left(5\right), 0}$\\ 
  \midrule
 0.2500 + 7.0678 $\I$ & 0.2498 + 7.0680 $\I$ & 0.2500 + 7.0677 $\I$   &$\zeta_1/2\approx$  0.25+ 7.0674 $\I$\\
0.2498 + 10.5130 $\I$& 0.2501 + 10.5127 $\I$& 0.2496 + 10.5117 $\I$&$\zeta_2/2\approx$  0.25 + 10.5110 $\I$\\
0.2507 + 12.5071 $\I$& 0.2495 + 12.5079 $\I$& 0.2495 + 12.5082 $\I$&$\zeta_3/2\approx$  0.25 + 12.5054 $\I$\\
0.2508 + 15.2170 $\I$& 0.2496 + 15.2183 $\I$& 0.2488 + 15.2183 $\I$&$\zeta_4/2\approx$  0.25 + 15.2124 $\I$\\
0.2505 + 16.4737 $\I$& 0.2497 + 16.4742 $\I$& 0.2499 + 16.4745 $\I$&$\zeta_5/2\approx$  0.25 + 16.4675 $\I$\\
\midrule
& &  -0.0000 + 1.9520 $\I$& $\pi\I/\log(5)\approx$ 1.9520 $\I$\\
 0.0000   + 2.8596 $\I$& & & $\pi\I/\log(3)\approx$ 2.8596 $\I$\\
-0.0001 + 4.5324 $\I$& -0.0001 + 4.5326 $\I$& &$\pi\I/\log(2)\approx$ 4.5324 $\I$\\
& & -0.0000 + 5.8562 $\I$&  $3\pi\I/\log(5)\approx$ 5.8559 $\I$\\
-0.0007 + 8.5797 $\I$& & & $3\pi\I/\log(3)\approx$ 8.5788 $\I$\\
& & -0.0005 + 9.7608 $\I$& $5\pi\I/\log(5)\approx$ 9.7599 $\I$\\
0.0041 + 13.6006 $\I$& -0.0034 + 13.6003 $\I$& & $3\pi\I/\log(2)\approx$ 13.5971 $\I$\\
& & -0.0022 + 13.6671 $\I$& $7 \pi\I/\log(5)\approx$ 13.6639 $\I$\\
0.0009 + 14.3046 $\I$& & & $5\pi\I/\log(3)\approx$ 14.2980 $\I$\\
  \bottomrule
  \end{tabular}
  \end{tabsize}
  \end{center}
  \end{table}

\begin{table}[htb!]
 \caption{Resonances  for $C_{\ell, 1/2}$ when $\ell$ is a special length}
 \label{table:genus-one-half-twist-special-ell}
  \begin{center} 
  \begin{tabsize}
  \begin{tabular}{cccc}
  \toprule
  \multicolumn{3}{c}{Computed resonances for}& \multirow{2}{*}{Poles of \eqref{eq:half-twist-C-special1}}\\
  $C_{2\arccosh\left(\frac{3}{2}\right), 1/2}$&$C_{2\arccosh\left(2\right), 1/2}$&$C_{2\arccosh\left(3\right), 1/2}$\\ 
  \midrule
  0.2499+7.0681 $\I$&0.2499+7.0678 $\I$&0.2499+7.0681 $\I$&$\zeta_1/2\approx$ 0.25+7.0674 $\I$\\
  0.2503+10.5123 $\I$&0.2501+10.5128 $\I$&0.2500+10.5121 $\I$&$\zeta_2/2\approx$ 0.25+10.5110 $\I$\\
  0.2499+12.5078 $\I$&0.2512+12.5067 $\I$&0.2492+12.5079 $\I$&$\zeta_3/2\approx$ 0.25+12.5054 $\I$\\
    \midrule
  &0.0+4.5324 $\I$&&$\pi\I/\log(2)\approx$ 4.5324 $\I$\\
  &0.0003+13.6004 $\I$&&$3\pi\I/\log(2)\approx$ 13.5971 $\I$\\
  \bottomrule
  \end{tabular}
  \end{tabsize}
  \end{center}
  \end{table}

 \begin{table}[htb!]
  \caption[Embedded eigenvalues $\lambda=\frac{1}{4}+t^2$ and their multiplicities $\mu(t)$ for $C_{\ell, 0}$ and $C_{\ell, 1/2}$ when $\ell$ is a special length]{Embedded eigenvalues $\lambda=\frac{1}{4}+t^2$ and their multiplicities $\mu(t)$ for $C_{\ell, 0}$ and $C_{\ell, 1/2}$ when $\ell$ is a special length. 
 The actual computed values for  $C_{\arccosh\left(5\right), 0}$ and $C_{2\arccosh\left(3\right), 1/2}$ may differ  in the last digit from those shown in the 
 first and fourth columns, resp. The last two eigenvalues for $C_{\arccosh\left(3\right), 0}$ are shown for comparison with those for $B_{1/\sqrt{2}}$ in Table \ref{table:artin-ev}.
 A subset of these eigenvalues are eigenvalues for the groups $\tilde \Gamma_0(5)$, $\tilde \Gamma_0(6)$, and $\Gamma^3$. These are in good agreement with those computed in \cite{JST13} and \cite{str}}
 \label{table:genus-one-special-ell-evalues}
  \begin{center} 
  \begin{tabsize}
  \begin{tabular}{cccccccccc}
  \toprule
 \multicolumn{2}{c}{$C_{\arccosh\left(2\right), 0}$}&   \multicolumn{2}{c}{$C_{\arccosh\left(3\right), 0}$}& \multicolumn{2}{c}{$C_{\arccosh\left(9\right), 0}$} &\multicolumn{2}{c}{$C_{2\arccosh\left(\frac{3}{2}\right), 1/2}$}& \multicolumn{2}{c}{$C_{2\arccosh\left(2\right), 1/2}$}\\
  \multicolumn{2}{c}{and}&&&&&  \multicolumn{2}{c}{and}\\
  \multicolumn{2}{c}{$C_{\arccosh\left(5\right), 0}$}&&&&&\multicolumn{2}{c}{$C_{2\arccosh\left(3\right), 1/2}$}\\
  $t$&$\mu(t)$&  $t$&$\mu(t)$& $t$&$\mu(t)$&$t$&$\mu(t)$& $t$&$\mu(t)$\\
  \midrule
2.42507 & 1 & 2.89100 & 2 &  2.00968 & 1 &2.95648 & 2 & 2.89101 & 2\\
 3.24141 & 1 & 3.25000 & 1 &  3.44480 & 1 &3.53606 & 2 & 3.25001 & 1\\
 3.97879 & 1 & 4.13811 & 2 & 3.70334 & 1 &3.70339 & 1 & 4.13811 & 2\\
 4.54850 & 1 & 4.36806 & 1 & 4.13245 & 1 &4.51375 & 2 & 4.36809 & 1 \\
 4.64665 & 1 & 4.95729 & 2  & 4.65694 & 1&5.50420 & 2 & 4.95731 & 2  \\
 4.94791 & 1 & 5.62822 & 1  & 4.89729 & 1 &5.81512 & 2 & 5.62824 & 1\\
 5.09888 & 1 & 6.02334 & 2 & 5.05935 & 1 &5.87951 & 1 & 6.02335 & 2\\
 5.19203 & 1 & 6.22332 & 1  & 5.34525 & 1 &6.62069 & 1 & 6.22332 & 1 \\
 5.35557 & 1 & 6.69430 & 2  & 5.87949 & 1 &6.64683 & 2 & 6.69441 & 2 \\
 6.12073 & 1 & 7.22111 & 1 & 6.05422 & 1 & 6.78381 & 2 & 7.22571 & 1\\
 &&\vdots&&&&&&&\\
  &&8.92338&1&&&&&&\\
   &&\vdots&&&&&&&\\
    &&10.9213&1&&&&&&\\
 \bottomrule
    \end{tabular}
    \end{tabsize}
  \end{center}
  \end{table}

\begin{table}[htb!]
 \caption{Resonances  for $D$}
  \label{table:genus-0-3cusps}
  \begin{center} 
  \begin{tabsize}
  \begin{tabular}{cl}
  \toprule
0.2500 + 7.0675  $\I$& \rdelim\}{3}{*}[$\zeta_1/2\approx$  0.25+7.0674 $\I$]\\
0.2500 + 7.0676  $\I$& \\
0.2499 + 7.0680  $\I$& \\
0.2497 + 10.5117  $\I$& \rdelim\}{3}{*}[$\zeta_2/2\approx$  0.25+10.5110 $\I$]\\
0.2496 + 10.5121  $\I$& \\
0.2500 + 10.5125  $\I$& \\
0.2504 + 12.5060  $\I$& \rdelim\}{3}{*}[$\zeta_3/2\approx$  0.25+12.5054 $\I$]\\
0.2506 + 12.5062  $\I$& \\
0.2496 + 12.5078  $\I$& \\
0.2510 + 15.2152  $\I$& \rdelim\}{3}{*}[$\zeta_4/2\approx$  0.25+15.2124 $\I$]\\
0.2489 + 15.2153  $\I$& \\
0.2490 + 15.2154  $\I$& \\
0.2503 + 16.4695  $\I$& \rdelim\}{3}{*}[$\zeta_5/2\approx$  0.25+16.4675 $\I$]\\
0.2504 + 16.4699  $\I$& \\
0.2494 + 16.4724  $\I$& \\
0.2463 + 18.7973  $\I$& \rdelim\}{3}{*}[$\zeta_6/2\approx$  0.25+18.7931 $\I$]\\
0.2454 + 18.7983  $\I$& \\
0.2503 + 18.7984  $\I$& \\
\midrule
-0.0000 + 4.5324  $\I$& \rdelim\}{2}{*}[$\pi \I/\log(2)\approx$ 4.5324 $\I$]\\
-0.0000 + 4.5325  $\I$& \\[4pt]
0.0002 + 9.0654  $\I$& \rdelim\}{2}{*}[$2\pi \I/\log(2)\approx$ 9.0647 $\I$]\\
0.0002 + 9.0657  $\I$& \\[4pt]
-0.0005 + 13.5988  $\I$& \rdelim\}{2}{*}[$3\pi \I/\log(2)\approx$ 13.5971 $\I$]\\
-0.0006 + 13.5997  $\I$& \\[4pt]
-0.0000 + 18.1367  $\I$& \rdelim\}{2}{*}[$4\pi \I/\log(2)\approx$ 18.1294 $\I$]\\
0.0018 + 18.1369  $\I$& \\[3pt]
 \bottomrule
  \end{tabular}
  \end{tabsize}
  \end{center}
  \end{table}
\end{appendices}

\end{document}